\newtheorem{thm}{Theorem}[section]
\newtheorem{lem}[thm]{Lemma}
\newtheorem{cor}[thm]{Corollary}
\newtheorem{clm}[thm]{Claim}
\theoremstyle{definition}
\newtheorem{defn}{Definition}[section]
\newtheorem{ques}[defn]{Question}
\newtheorem{note}[defn]{Note}
\theoremstyle{remark}
\newtheorem{rmk}{Remark}[section]
\def\square{\hfill${\vcenter{\vbox{\hrule height.4pt \hbox{\vrule
width.4pt height7pt \kern7pt \vrule width.4pt} \hrule height.4pt}}}$}
\title{An upper bound on distance degenerate handle additions }
\author{Yanqing Zou}
\date{April 14th, 2017}                                           % Activate to display a given date or no date
\thanks{This work was partially supported by NSFC No. 11601065.}
\begin{document}
\begin{abstract}
We prove that for any distance at least 3 Heegaard splitting and  a boundary component $F$, there is  a  diameter finite ball  in the curve complex $\mathcal {C}(F)$  so that it contains all distance degenerate curves or slopes in $F$.
\end{abstract}
\maketitle

\vspace*{0.5cm} {\bf Keywords}: Heegaard Distance, Curve Complex, Handle Addition.\vspace*{0.5cm}

AMS Classification: 57M50

\section{Introduction}
\label{sec1}
Let $M$ be a compact orientable 3-manifold with a boundary component $F$. Then it admits a Heegaard splitting $V\cup_{S}W$, where $F\subset V$. Here $S$ is  $\partial_{+}V$ (resp. $\partial_{+}W$) and $\partial_{-}V=\partial V-S$ (resp. $\partial_{-}W=\partial W-S$). 

Let $r$ be a slope or an essential simple closed curve in $F$.  Then  a Dehn filling or a handle addition along $r$ on $M$ produces a 3-manifold $M(r)$.  Since $V\cup_{S} W$ is a Heegaard splitting of $M$,  $M(r)$ admits a Heegaard splitting $V(r)\cup_{S}W$,  where $V(r)$ is  obtained  from attaching a 2-handle along $r$ on $V$ and capping a possible 2-sphere by a 3-ball.

There  is a long story on studying handle additions or Dehn fillings on a 3-manifold.  Lickorish \cite{lickorish} proved that every closed orientable 3-manifold is a Dehn surgery along some link or knot in $S^{3}$.  For any knot $K$ in $S^{3}$,  Gordon and Luecke \cite{GL}  proved that  only the trivial Dehn surgery produces  $S^{3}$.  In general,  Culler,  Gordon, Luecke and Shalen introduced a cyclic surgery theorem, see \cite{CGLS}.  One of its corollaries is that  only integer surgery on a non torus knot can produce a cyclic fundamental group.  

Given a knot $K$ in $S^{3}$,  it is either prime or a connected sum of some prime knots. Let $\eta(K)$ be the regular neighborhood of $K$ in $S^{3}$. If $K$ is prime,  then it is either hyperbolic, i.e.,  $E(K)=S^{3}\setminus \eta(K)$ admits a complete hyperbolic metric, or a torus knot or a satellite knot.  But if $K$ is a connected sum of some knots, then there is a properly embedded essential annulus in $E(K)$. In this case, by Thurston's Haken hyperbolic theorem, $E(K)$ admits no hyperbolic structure.  Thus with respect to the geometry of $E(K)$,  hyperbolic knots are mostly concerned. For a hyperbolic knot $K$, Thurston \cite{thurston} proved that all but finitely many Dehn fillings on $E(K)$ produce hyperbolic 3-manifolds. It was conjectured by Gordon \cite{gordon98} that (1) there are at most 10 non hyperbolic Dehn fillings (there are 10 non hyperbolic Dehn fillings for the figure eight knot); (2) the intersection number of any two non hyperbolic slopes is at most 8. Recently,  Agol \cite{agol1} proved that for all but finitely many one cusped hyperbolic 3-manifolds, the intersection number is 5 while there are at most 8 non hyperbolic Dehn fillings.  Later Lackenby and Meyerhoff \cite{LM} proved this conjecture completely. Moreover, by  Thurston's Geometrization conjecture \cite{thurston} proved by Perelman \cite{Perelman01, Perelman02, Perelman03},  except a small Seifert fiber space, every closed orientable non hyperbolic 3-manifold is either reducible or toroidal. So if we consider the reducible Dehn fillings on $E(K)$ , i.e., the  resulted 3-manifold is reducible,   then the number 10 is reduced to 2, see \cite{GL2}.  For more cases,  see \cite{gordon}.

It is natural to extend these Dehn fillings results  into a handle addition on a hyperbolic  3-manifold. Then we consider  a hyperbolic 3-manifold $M$ with a totally geodesic boundary component  $F$.  Before stating some  results about  handle additions on $F$, we introduce a definition. An essential simple closed curve $r\subset F$ is called a non degenerate curve if $M(r)$ is also hyperbolic. Otherwise, it is degenerate.  Then if $r$ is  degenerate in $F$, by Thurston's Haken hyperbolic theorem,  $M(r)$ is  reducible, or boundary reducible, or annular  or toroidal. So to figure out all non degenerate curves in $F$,  it is sufficient to give a classification of all degenerate curves from the topology of $M(r)$.   Scharlemann and Wu \cite{sw} studied all those degenerate curves on $F$ and proved that there are finitely many basic degenerate curves in $F$ so that either each degenerate curve   is basic or it bounds a pair of pants with a basic degenerate curve. It means that for most of all essential simple closed curves, $M(r)$ is hyperbolic.   Unfortunately there is no upper bound on their intersection numbers among all  degenerate curves in $F$, for example, two complicated intersecting  degenerate curves  with respect to a same basic degenerate curve.  However, if we consider  two separating reducible handle addition curves,  then their intersection number is at most 2,  see \cite{QZ}.  Meanwhile, Lackenby \cite{La}  introduced a handlebody addition along $F$  and proved that  there is an upper bound on all non hyperbolic handlebody additions.

It is known that every Heegaard surface of $M$ is also a Heegaard surface of $M(r)$.  The properties of  a Heegaard splitting of $M$ under a Dehn filling or a handle addition  are concerned, such as the minimal genus, Heegaard distance.  It is not hard to see that the minimal Heegaard genus of $M(r)$ is not larger than $M$'s.  Then it is interesting to know that when they have the same  minimal Heegaard genera.  There are some results as follows: Rieck \cite{rieck01}  proved that for most of all $r$ in $F$, the minimal Heegaard genus of $M(r)$ is  at most  one less then  $M$'s; Moriah and Sedgwick \cite{MS} proved that for all but finitely many curves in $F$,  $M(r)$ has the same genus as $M$; Li \cite{li5} proved that  if the gluing map of a handlebody addition is sufficiently complicated,  then  the resulted 3-manifold has the same minimal Heegaard genus as $M$. 

Hempel \cite{hempel} introduced the Heegaard distance for studying a Heegaard splitting. More precisely, let $\{\alpha_{0},...,\alpha_{n}\}$ be a collection of essential simple closed curves in $S$ so that for any $1\leq i\leq n$, $\alpha_{i}$ is disjoint from $\alpha_{i-1}$. Then for a Heegaard splitting $V\cup_{S}W$, the Heegaard distance $d(V,W)$ is the minimum of all $n$ so that $\alpha_{0}$ (resp. $\alpha_{n}$) bounds a disk in $V$ (resp. $W$). Since each essential disk in $V$ is also an essential disk of $V(r)$,   $d(V(r), W)\leq d(V,W)$.  So there is a question. 

\begin{ques}
\label{question1.1}
Is $d(V(r),W)=d(V,W)$?
\end{ques}

Unfortunately,   for $M=E(K)$, some high distance knot $K$ ( see Minsky, Moriah and Schleimer \cite{MMS}),    if $r$ is the meridian,  then $M(r)$ is $S^{3}$.  By Waldhausen theorem \cite{Waldhausen1},  every genus at least 2 Heegaard splitting of $S^{3}$ is stabilized and hence has distance 0. So the answer to Question \ref{question1.1} is no. 

However, by those results of Hempel\cite{hempel}, Hartshorn \cite{ha} and Scharlemann \cite{sch01}, if $M$ admits  a distance at least 3 Heegaard splitting, then it is irreducible, boundary irreducible,  atoroidal and anannular.  Then by Thurston's Haken hyperbolic theorem,  it is hyperbolic.  Compared with Schlarlemann and Wu's hyperbolic handle addition theorem,  it was conjectured by Ma and Qiu  \cite{MQ} that $d(V(r), W)=d(V,W)$ for most of  all curves.

To quote such an exceptional curve as the meridian of a knot,  it is proper to introduce the definition of a distance degenerate curve.  We say  $r$ is a distance degenerate curve in $F$  if $d(V(r), W)$ is less than $d(V,W)$. Furthermore, attaching 2-handle  to a 3-manifold along a distance degenerate curve  is called a distance degenerate handle addition. By standard techniques, there is a theorem as follows.

\begin{thm}
\label{theorem1.1} If the Heegaard distance of  $V\cup_{S}W$ is at least 3, then there  are an essential simple closed curve $c\subset F$ and a real number $\mathcal {R}>0$ so that 
for any distance degenerate curve $r$ in $F$, $d_{\mathcal{C}(F)}(c,r)< \mathcal{R}$. 
\end{thm}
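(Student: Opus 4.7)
The strategy is to convert the distance degeneracy of $r$ into a geometric ``witness surface'' in the compression body $V$, and then to bound the $F$--boundary of this witness in $\mathcal{C}(F)$ using (i) the incompressibility of $F$ and the anannular property of $M$ (both inherited from $d(V,W)\geq 3$), and (ii) the bounded diameter of $\mathcal{D}(W)$ in $\mathcal{C}(S)$.

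First I would extract a witness surface. Since $r$ is distance degenerate, choose a disk $D\in\mathcal{D}(V(r))$ with $d_S(\partial D,\mathcal{D}(W))<d(V,W)$; necessarily $D\notin\mathcal{D}(V)$, for otherwise $d(V,W)\leq d_S(\partial D,\mathcal{D}(W))<d(V,W)$. Put $D$ in efficient position with respect to the $2$--handle $H$ added along $r$, so that $D\cap H$ is a disjoint union of $k\geq 1$ cocores. Setting $P:=D\setminus\operatorname{int}(H)$ gives a planar surface in $V$ with $\partial P\cap S=\partial D$ and $\partial P\cap F$ equal to $k$ parallel copies of $r$. Minimizing over all admissible pairs $(D,k)$, a standard surgery/replacement argument makes $P$ incompressible in $V$: any compressing disk for $P$ would split it into two simpler planar pieces whose recapping by cocores of $H$ would furnish a disk in $\mathcal{D}(V(r))\setminus \mathcal{D}(V)$ of strictly smaller complexity still realising the distance.

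Next I would locate $\partial D$ in $\mathcal{C}(S)$. By the results of Hempel, Hartshorn and Scharlemann cited in the introduction, $d(V,W)\geq 3$ implies that $M$ is irreducible, $\partial$--irreducible, atoroidal and anannular; in particular $F$ is incompressible in $V$. Together with $d_S(\partial D,\mathcal{D}(W))<d(V,W)$ and the bounded diameter of $\mathcal{D}(W)\subset\mathcal{C}(S)$, this pins $\partial D$ inside a fixed ball of $\mathcal{C}(S)$ whose radius depends only on the splitting $V\cup_S W$. I would then transfer this bound to $\mathcal{C}(F)$ through the witness surface $P$. Performing an iterated sequence of $\partial$--compressions of $P$ along arcs in $F$, each such step modifies the $F$--boundary by a surgery along an arc in $F$ whose endpoints lie on a copy of $r$, hence advances at most a uniform constant in $\mathcal{C}(F)$; the process terminates after $O(|\chi(P)|)$ steps, with $|\chi(P)|$ controlled (via Haken--finiteness in the compression body $V$) by the Heegaard genus of $S$. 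The terminal surface is essential and $F$--$\partial$--incompressible, and by anannularity together with the location of $\partial D$ its $F$--boundary lies in a fixed bounded set $\mathcal{B}\subset\mathcal{C}(F)$ depending only on $V\cup_S W$. Choosing any $c\in\mathcal{B}$ and letting $\mathcal{R}$ be the sum of $\operatorname{diam}(\mathcal{B})$ and the cumulative $\partial$--compression displacement yields the theorem.

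The main technical obstacle is the transfer step: one must run the boundary--compression process so that each move is quantitatively controlled in the coarse geometry of $\mathcal{C}(F)$, and invoke the anannular hypothesis to rule out pathological witness surfaces (for instance, vertical essential annuli that could realise arbitrary $r\subset F$) which would otherwise prevent the terminal $F$--boundary from being confined to $\mathcal{B}$. The interplay between Euler--characteristic bookkeeping on $P$ and curve--complex distance is where the ``standard techniques'' do real work.
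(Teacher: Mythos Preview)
Your proposal rests on a false premise: $\mathcal{D}(W)$ does \emph{not} have bounded diameter in $\mathcal{C}(S)$. For any compression body with $g(\partial_+)\geq 2$ the disk set has infinite diameter in the curve complex (it is quasi-convex, not bounded). Hence the inequality $d_{\mathcal{C}(S)}(\partial D,\mathcal{D}(W))<d(V,W)$ does not confine $\partial D$ to any fixed ball of $\mathcal{C}(S)$, and your transfer step has nothing to transfer.

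The paper repairs exactly this point by passing to a \emph{subsurface} $S_F\subset S$: one cuts $V$ along a suitable disk system so that one complementary piece is $F\times I$ with $S_F$ the horizontal copy of $F$ in $S$, and then invokes the Li/Masur--Schleimer theorem (Lemma~\ref{lemma2.1}) that $\pi_{S_F}(\mathcal{D}(W))$ has diameter at most $12$ in $\mathcal{C}(S_F)$, since $\partial S_F$ is disk-busting for $W$ when $d(V,W)\geq 3$. Because $S_F$ sits as a slice of $F\times I$, each strongly essential curve in $S_F$ cobounds a spanning annulus with a curve in $F$; that spanning-annulus correspondence is the actual bridge from $S$ to $\mathcal{C}(F)$. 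The paper then projects the whole geodesic $\alpha_0,\dots,\alpha_l$ into $S_F$, bounds successive intersection numbers combinatorially (Lemma~\ref{lemma4.2}), and converts intersection number into distance (Lemma~\ref{lemma2.0}). Your planar witness $P$ is close in spirit to the paper's ``SE-position'' of $D$ (Lemma~\ref{lemma3.1}), but the quantitative control has to come from bounded subsurface projections of the disk complex, not from a nonexistent global bound on $\mathcal{D}(W)$.

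There is also a secondary gap in the bookkeeping: $|\chi(P)|=k-1$ where $k$ is the number of cocores of the $2$-handle that $D$ meets, and nothing bounds $k$ in terms of $g(S)$; ``Haken-finiteness in $V$'' does not supply such a bound. And anannularity is a property of $M$, not of the compression body $V$, which is full of spanning annuli---so it cannot be used as stated to forbid annular witness pieces inside $V$.
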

\begin{note}
There is a precise description of $R$ in Page 22, Section 4.
\end{note}
It is not hard to see that for a distance at least 3 Heegaard splitting, every degenerate curves  in Schlarmann and Wu's hyperbolic  addition  theorem is  also a distance degenerate curve. So Theorem \ref{theorem1.1} gives a bound for all those degenerate  curves in the curve complex.

\begin{rmk}
If $M$ is $T^{2}\times I$, then it admits an unique strongly irreducible Heegaard splitting. It is known that  for any slope $r\subset \partial M$, $M(r)$ is a solid torus.  So every Heegaard splitting of $M(r)$ is weakly reducible and hence has distance at most 1.  It means that for any slope $r\subset \partial M$,  $r$ is a distance degenerate slope with respect to this strongly irreducible Heegaard splitting.
\end{rmk} 

\begin{rmk}
Lustig and Moriah \cite{LM} proved that  there is a measure defined on the curve complex $\mathcal {C}(F)$ so that for any $\mathcal {R}> 0$ and any essential simple closed curve $c$, the measure of a $\mathcal {R}$-ball of $c$ is 0.  Under this circumstance, for almost all  choices of $r$ in $F$, $d(V(r),W)=d(V,W)$ .
\end{rmk}

\begin{rmk}
Let $c_{1}$ and $c_{2}$ be two separating essential simple closed curves in $S$.  Suppose that  $d_{\mathcal {C}(S)}(c_{1}, c_{2})=l\geq 3$.  Then attaching two 2-handles along $c_{1}$ and $c_{2}$ from two different sides of $S$ produces a Heegaard splitting, denoted by $V\cup _{S}W$. Since $V$ (resp. $W$) has only one essential disk up to isotopy,  the distance of $V\cup_{S}W$ is equal to $l$.  Then by Theorem \ref{theorem1.1},  we can attach  2-handles to its boundary and some 3-balls so that  $V$ (resp $W$) is changed into a handlebody $H_{1}$ (resp. $H_{2}$) and furthermore $d(H_{1}, H_{2})=l$,  see also in \cite{IJK, QZG, ZQZ}.
\end{rmk}

\begin{rmk}
If  $V\cup_{S}W$ is genus two Heegaard splitting,  Ma, Qiu and Zou \cite{MQZ} proved the main theorem by a different method.  Meanwhile, there is a result proved  by Liang, Lei and Li \cite{LLL}, which says that if the Heegaard splitting is locally complicated,  then there is a bound for all distance degenerate curves in $\mathcal {C}(F)$. 
\end{rmk} 

\begin{rmk}
If $d(V,W)\geq 2g(S)$,  then by Scharlemann and Tomova's result \cite{ST}, $V\cup W$ is a minimal Heegaard splitting. Then by the proof of Theorem \ref{theorem1.1},  we can attaching a handlebody $H$ along distance non degenerate slopes or curves in $F$  so that $V(H)\cup_{S}W$ is still a minimal Heegaard splitting. So it gives a description of Li's sufficiently complicated gluing map between a handlebody and $M$ in \cite{li5}.
\end{rmk}
We call a knot $K$ in $S^{3}$ a high distance knot if  $E(K)$ admits a distance at least 3 Heegaard splitting. 
It is known that for any knot $K\subset S^{3}$ and any distance at least 3 Heegaard splitting of $E(K)$,  the meridian  is  a distance degenerate slope.  For  $M(r)$ is $S^{3}$ and by Waldhausen theorem \cite{Waldhausen1},  every genus  at least 2 Heegaard splitting is stabilized and thus has distance 0.  Then we choose the meridian  as  the center  among all distance degenerate slopes of $E(K)$'s all distance at least 3 Heegaard splittings.  So Theorem \ref{theorem1.1} is updated into the following corollary.

\begin{cor}
\label{theorem1.2}
For any high distance knot $K\subset S^{3}$,  there is a $R_{K}$-ball of the meridian  in $\mathcal {C}[\partial E(K)]$  so that  it contains all distance degenerate slopes of $E(K)$'s all distance at least 3 Heegaard splittings.
\end{cor}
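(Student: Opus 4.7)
The strategy is to reduce Corollary \ref{theorem1.2} to Theorem \ref{theorem1.1} by exploiting the special role of the meridian $\mu$: it is a distance degenerate slope for \emph{every} distance at least $3$ Heegaard splitting of $E(K)$, so one can recentre the ball produced by Theorem \ref{theorem1.1} at $\mu$ via the triangle inequality, converting its splitting-dependent centre $c$ into the splitting-independent slope $\mu$.

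The key input is that $\mu$ is distance degenerate. Indeed, since $K\subset S^{3}$, Dehn filling $E(K)$ along $\mu$ produces $S^{3}$, and by Waldhausen's classification every Heegaard splitting of $S^{3}$ of genus at least $2$ is stabilized, so has distance at most $1$. Hence for any Heegaard splitting $V\cup_{S}W$ of $E(K)$ with $d(V,W)\geq 3$ one has $d(V(\mu),W)\leq 1<d(V,W)$, as required. Applying Theorem \ref{theorem1.1} to $V\cup_{S}W$ with $F=\partial E(K)$ yields a curve $c\subset F$ and a number $\mathcal{R}=\mathcal{R}(V,W)>0$ with $d_{\mathcal{C}(F)}(c,r)<\mathcal{R}$ for every distance degenerate slope $r$. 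In particular $d_{\mathcal{C}(F)}(c,\mu)<\mathcal{R}$, and therefore the triangle inequality gives
\[
d_{\mathcal{C}(F)}(\mu,r)\leq d_{\mathcal{C}(F)}(\mu,c)+d_{\mathcal{C}(F)}(c,r)<2\mathcal{R}
\]
for every distance degenerate slope $r$ of $V\cup_{S}W$. So the $2\mathcal{R}(V,W)$-ball around $\mu$ already captures all distance degenerate slopes arising from this particular splitting.

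The main obstacle is upgrading this per-splitting bound to a single constant $R_{K}$ controlling every distance at least $3$ Heegaard splitting of $E(K)$ simultaneously. The plan is to exploit two facts. First, any stabilized Heegaard splitting has distance at most $1$, so every distance $\geq 3$ Heegaard splitting of $E(K)$ is unstabilized, and in fact strongly irreducible. Second, the explicit form of $\mathcal{R}$ recorded on page~22 depends on $V\cup_{S}W$ only through parameters of the Heegaard surface $S$ (principally its genus). Combining these with known finiteness results for strongly irreducible Heegaard splittings of a knot exterior in $S^{3}$, one expects $\sup_{V\cup_{S}W}\mathcal{R}(V,W)$ to be finite, and then $R_{K}:=2\sup_{V\cup_{S}W}\mathcal{R}(V,W)$ gives the required uniform radius. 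The delicate point is ensuring this supremum is genuinely finite; that is where one must use, rather than merely the per-splitting conclusion of Theorem \ref{theorem1.1}, the precise dependence of its constant on the splitting spelled out in Section~4.
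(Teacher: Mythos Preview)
Your strategy---recentre Theorem~\ref{theorem1.1}'s ball at the meridian via the triangle inequality, then upgrade to a uniform bound over all splittings---is the paper's own. The recentering step is fine. The gap is in your second step: the claim that ``the explicit form of $\mathcal{R}$ \ldots\ depends on $V\cup_{S}W$ only through parameters of the Heegaard surface $S$ (principally its genus)'' is false. Inspecting the formula on page~22, $\mathcal{R}$ depends on $g(S)$, on the Heegaard distance $m=d(V,W)$, and crucially on
\[
\mathcal{N}=\min\{\,|B\cap E|\ :\ E\text{ an essential disk in }W,\ B\text{ an }F\text{-disk}\,\}.
\]
The quantity $\mathcal{N}$ is a genuine invariant of the splitting, not of the surface: two Heegaard splittings of the same genus can have arbitrarily different $\mathcal{N}$. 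So a bound on genus alone does not bound $\mathcal{R}$, and your plan as written does not close.

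The paper fills this gap not by controlling $\mathcal{N}$ in terms of genus, but by establishing that $E(K)$ has only \emph{finitely many} isotopy classes of distance $\geq 3$ Heegaard splittings, after which one simply takes the maximum of the finitely many values $\mathcal{R}(V,W)$. This requires three specific ingredients your ``known finiteness results'' must name: Kobayashi--Rieck \cite{KR} bounds the genus of any distance $\geq 3$ splitting in terms of a triangulation of $M$; Scharlemann--Tomova \cite{ST} bounds the distance of any splitting by $\max\{d(V,W),2g(S)\}$; and Li's proof of the generalized Waldhausen conjecture \cite{li1,li2} gives only finitely many non-isotopic Heegaard splittings of each fixed genus. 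With these in hand the supremum is a finite maximum and the argument goes through.
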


This paper is organized as follows.  We introduce  some results of a compression body in Section \ref{sec2} and some lemmas of  the curve complex  in Section \ref{sec3} .  Then we give  proofs of Theorem \ref{theorem1.1} and Corollary \ref{theorem1.2}  in Section \ref{sec4}.

 {\bf Acknowledgement.} We would like to thank Jiming Ma and Ruifeng Qiu for pointing out Question \ref{question1.1},  thank Ruifeng Qiu for many discussions and pointing out  some mistakes  in our early draft. 

\section{Subsurface projection of the disk complex}
\label{sec2}
Let $S$ be a closed orientable genus at least 2 surface.  Harvey\cite{h81}  introduced the curve complex on $S$, denoted by $\mathcal {C}(S)$, as follows. The vertices consist of all isotopy classes of essential, i.e., incompressible and non peripheral, simple closed curves in $S$.  A $k$-simplex is a collection of $k+1$ vertices which are presented by  pairwise non isotopy and disjoint  essential simple closed curves. 

Let $F$ be a compact orientable surface. If $F$ is  an  at most once punctured torus,  then  $\mathcal {C}(F)$ is defined as follows. The vertices consist of all isotopy classes of essential, i.e., incompressible and non peripheral, simple closed curves in $F$. A $k$-simplex is the collection of $k+1$ vertices which are presented by  pairwise non isotopy and intersecting one point  essential simple closed curves.   If $F$ is a fourth punctured 2-sphere, then the definition of $\mathcal {C}(F)$ is slightly different, which is  defined as follows. The vertices consist of all isotopy classes of essential, i.e., incompressible and non peripheral, simple closed curves in $F$. A $k$-simplex is the collection of $k+1$ vertices which are presented by  pairwise non isotopy and intersecting twice essential simple closed curves. In general, if $\chi(F)\leq -2$, the definition of  $\mathcal {C}(F)$ is similar to  $\mathcal {C}(S)$. 

 It is assumed that the length of an edge in $\mathcal {C}(F)$ is 1.  Then for any two vertices $\alpha$ and $\beta$,  $d_{\mathcal {C}(F)}(\alpha, \beta)$ is defined to be the minimum of lengths of paths from $\alpha$ to $\beta$ in $\mathcal {C}(F)$.  So if $\alpha$ is disjoint from but not isotopic to $\beta$,  then there is an edge between them and so $d_{\mathcal {C}(F)}(\alpha, \beta)=1$. What if  $\alpha$ intersects $\beta$?

\begin{lem}
\label{lemma2.0}
If $\alpha$ intersects $\beta$ in $\mathcal {N}$ points up to isotopy,  then $d_{\mathcal {C}(F)}(\alpha, \beta)\leq 2\log_{2}2\mathcal{N}+1$.
\end{lem}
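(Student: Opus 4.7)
The plan is to argue by strong induction on $\mathcal{N}$, halving the intersection number at each step at the cost of a bounded increase in $\mathcal{C}(F)$-distance. For the base case, if $\mathcal{N}=0$ then $\alpha$ and $\beta$ are disjoint (or isotopic), giving $d_{\mathcal{C}(F)}(\alpha,\beta)\leq 1$; if $\mathcal{N}=1$ then a regular neighborhood of $\alpha\cup\beta$ is a once-punctured torus whose boundary component is essential in $F$ and disjoint from both curves, giving $d_{\mathcal{C}(F)}(\alpha,\beta)\leq 2$. In either case the bound $2\log_2 2\mathcal{N}+1\geq 3$ is satisfied.

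For the inductive step with $\mathcal{N}\geq 2$, I would construct an essential simple closed curve $\gamma\subset F$ satisfying
\[
|\gamma\cap\beta|\leq \tfrac{\mathcal{N}-2}{2}\qquad\text{and}\qquad d_{\mathcal{C}(F)}(\alpha,\gamma)\leq 1.
\]
Pick two points $p,q\in\alpha\cap\beta$ that are consecutive along $\beta$; the subarc $\beta'\subset\beta$ between them has interior disjoint from $\alpha$, and $\{p,q\}$ cuts $\alpha$ into two subarcs $\alpha_1,\alpha_2$. Band-summing each $\alpha_i$ with $\beta'$ (after pushing $\beta'$ slightly off $\beta$) produces two simple closed curves $\gamma_1,\gamma_2$ with
\[
|\gamma_1\cap\beta|+|\gamma_2\cap\beta|=\mathcal{N}-2,
\]
so one of them, say $\gamma_1$, satisfies $|\gamma_1\cap\beta|\leq(\mathcal{N}-2)/2$. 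A regular neighborhood of the theta-graph $\alpha\cup\beta'$ is a pair of pants whose three boundary components are isotopic (inside the neighborhood) to $\alpha$, $\gamma_1$ and $\gamma_2$ respectively, so $\gamma_1$ can be isotoped off $\alpha$ and hence $d_{\mathcal{C}(F)}(\alpha,\gamma_1)\leq 1$. Applying the induction hypothesis to $(\gamma_1,\beta)$ then yields
\[
d_{\mathcal{C}(F)}(\alpha,\beta)\leq 1+\bigl(2\log_2(\mathcal{N}-2)+1\bigr)\leq 2\log_2 2\mathcal{N}+1.
\]

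The main obstacle is ensuring that $\gamma_1$ really is essential in $F$, since a priori it could bound a disk, be peripheral, or be isotopic to $\alpha$ or $\beta$. I would address this by first placing $\alpha$ and $\beta$ in minimal position (which rules out $\gamma_1$ being isotopic to $\alpha$, because the geometric intersection $\hat{\imath}(\alpha,\beta)=\mathcal{N}$ is strictly larger than $(\mathcal{N}-2)/2$) and by varying the choice among the $\mathcal{N}$ adjacent pairs $(p,q)$ along $\beta$; if every such surgery produced only inessential curves, one can piece the resulting disks and peripheral annuli together to contradict the essentiality of $\beta$. A small separate check is needed for the low-complexity surfaces (once-punctured torus and four-holed sphere), where $\mathcal{C}(F)$ is defined via minimally-intersecting rather than disjoint simplices; in those cases $\mathcal{C}(F)$ is essentially the Farey graph, for which a logarithmic bound of this form in terms of intersection number is classical.
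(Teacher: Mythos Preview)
The paper does not prove this lemma itself; it simply cites Lemma~1.21 of Schleimer's curve-complex notes. Your halving-by-surgery argument is the classical proof of this bound and is essentially what is recorded there, so your approach is correct and aligned with the cited source.

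One sharpening worth noting: you need not vary the pair $(p,q)$ to secure essentiality. For any fixed choice the three curves $\alpha,\gamma_1,\gamma_2$ bound the pair-of-pants neighborhood of $\alpha\cup\beta'$; if $\gamma_1$ were inessential (null-homotopic or peripheral) then capping off that pair of pants would force $\gamma_2$ to be isotopic to $\alpha$, hence $|\gamma_2\cap\beta|\geq i(\alpha,\beta)=\mathcal N$, whereas by construction $|\gamma_1\cap\beta|+|\gamma_2\cap\beta|=\mathcal N-2$. This contradiction shows both $\gamma_i$ are essential for every choice of $(p,q)$, and you may simply take whichever meets $\beta$ fewer times.
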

\begin{proof}
See the proof of Lemma  1.21  in \cite{schleimer2}.
\end{proof}
Suppose $F\subset S$ is an essential subsurface, i.e., $\partial F$ is incompressible in $S$.  Masur and Minsky \cite{mm00} introduced the subsurface projection from $\mathcal {C}(S)$ to $\mathcal {C}(F)$ as follows. Let $\alpha$ be a vertex in $\mathcal {C}(S)$, where $\alpha\cap F\neq \emptyset$ up to isotopy. Then either $\alpha$ is an essential simple closed curve in $F$ or $\alpha\cap \partial F\neq \emptyset$. In the former  case, 
 the subsurface projection of $\alpha$, denoted by $\pi_{F}(\alpha)$, is $\alpha$.  In the later case,  $\alpha$ intersects $\partial F$ efficiently, i.e., there is no bigon bounded by them in $S$. Let $a$ be an arbitrary one  arc of $\alpha\cap F$. Then $\pi_{F}(\alpha)$ is  defined to be an arbitrary one essential simple closed curve of $\partial \overline{N(a\cup \partial F)}$ in $F$.  Under the definition of  the subsurface projection, if  two disjoint curves $\alpha$ and $\beta$ both cut $F$, i.e., neither $\pi_{F}(\alpha)$ nor $\pi_{F}(\beta)$ is an empty set,  then $d_{\mathcal {C}(F)}(
\pi_{F}(\alpha), \pi_{F}(\beta))\leq 2$.  

If $\partial F$ is not connected,   some essential simple closed curve of $F$ cutting out  a planar surface while  some one  doesn't. To distinguish these two kinds of essential simple closed curves in $F$,   we introduce the  definition of a strongly essential curve,  see also in \cite{ZDQG}.

\begin{defn}
\label{defintion2.1}
An essential simple closed curve $C\subset F$ is strongly essential if $C$ doesn't  cut out a planar surface in $F$. 
\end{defn}

Similarly, for a properly embedded essential arc $a\subset F$,  $a$ is strongly essential if $\pi_{F}(a)$ is strongly essential in $F$. Otherwise, it is not strongly essential in $F$.   

If $S= \partial_{+} V$,  then there is a disk complex defined on $S$, denoted by $\mathcal{D}(S)$. The vertices consist of all isotopy classes of boundary curves of essential disk of $V$. A $k$-simplex is the collection of $k+1$ vertices which are  pairwise non isotopy and disjoint.  It is not hard to see that $\mathcal{D}(S)$ is a subcomplex of $\mathcal {C}(S)$.  Thus for an essential subsurface $F\subset S$, there is a subsurface projection from $\mathcal {D}(S)$ to $\mathcal {C}(F)$. Throughout  the finer structure of $\mathcal {D}(S)$,  Li\cite{li3}, Masur and Schleimer \cite{ms} proved that if $\partial F$ is disk-busting, i.e., it intersects the boundary of  every essential disk nonempty, then there is a bound on the diameter of subsurface projection of the  disk complex  for almost all cases.  More precisely, it is written as follows.
\begin{lem}
\label{lemma2.1}
 Let $F$ be a connected subsurface of $S$ so that  each component of $\partial F$ is
disk-busting. Then 
\begin{enumerate}
\item either $V$ is an $I$-bundle over a compact surface, $F$ is a component of the horizontal boundary of this $I$-bundle, and the vertical boundary of this $I$-bundle is a single annulus, or
\item $\pi_{F}(\mathcal{D}(S))$ has diameter at most 12 in $\mathcal {C}(F)$.
\end{enumerate}
\end{lem}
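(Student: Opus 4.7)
My plan is to adapt the Masur--Schleimer strategy for bounding diameters of subsurface projections of the disk complex. Pick two essential disks $D, E \in \mathcal{D}(S)$ whose boundaries cross $F$ essentially, isotoped to minimize $|\partial D \cap \partial E|$ and to meet $\partial F$ efficiently. The goal is to build a finite sequence of essential disks $D = D_0, D_1, \ldots, D_n = E$ such that consecutive projections $\pi_F(\partial D_{i-1})$ and $\pi_F(\partial D_i)$ lie within a small universal distance in $\mathcal{C}(F)$, with $n$ controlled in terms of the topology of $F$ alone; multiplying per-step jumps by the number of steps should yield the explicit constant $12$.

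The engine is outermost-disk surgery. Whenever $\partial D \cap \partial E \neq \emptyset$, an outermost arc $\alpha$ of $D \cap E$ in $E$ cuts off a subdisk $\delta \subset E$ whose interior is disjoint from $D$. Band-summing $D$ across $\delta$ produces two embedded disks $D', D'' \subset V$ with $|\partial D' \cap \partial E|$ and $|\partial D'' \cap \partial E|$ strictly smaller than $|\partial D \cap \partial E|$. Since each component of $\partial F$ is disk-busting, neither $\partial D'$ nor $\partial D''$ can avoid $\partial F$, so both have nontrivial projections. The boundaries $\partial D$ and $\partial D'$ differ by a subarc of $\partial E$, so the corresponding projections differ by a bounded amount (in fact at most $2$ in $\mathcal{C}(F)$, via the standard subsurface projection inequality recalled earlier). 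Iterating drives the intersection number to zero, at which point two disks with disjoint boundaries already have projection distance at most $2$.

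The dichotomy appears when this surgery machine jams: suppose that for some choice of $D, E$ every outermost surgery either produces an inessential disk or fails to give a new projection that is controllably close. A careful case analysis forces $\partial F$ to interact with every essential disk in a rigid way, namely each disk $D$ must have the two arcs of $\partial D \setminus \partial F$ parallel in $F$. Such global rigidity is only possible if $V$ is an $I$-bundle over a compact surface with $F$ a horizontal boundary component; the disk-busting hypothesis on $\partial F$ then forces the vertical boundary to consist of exactly one annulus rather than several disjoint annuli, giving alternative (1).

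The step I expect to be the main obstacle is extracting the explicit constant $12$: the per-step projection jump is easy, but bounding the total number of surgery steps uniformly in the topology of $F$ (rather than in the initial intersection number $|\partial D \cap \partial E|$) requires careful bookkeeping to avoid cycling and to ensure each surgery is genuinely simplifying. A closely related difficulty is the rigidity step: one must show that no ingenious choice of $D$, $E$, or outermost arc can avoid the $I$-bundle conclusion, and verify that the vertical boundary really is a single annulus. These two bookkeeping tasks are where the subtle combinatorics of $\mathcal{D}(S)$ versus the ambient surface $F$ enters, and they are the technical heart of the argument.
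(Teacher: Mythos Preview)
The paper does not prove Lemma~\ref{lemma2.1}; it quotes it as a result of Li \cite{li3} and Masur--Schleimer \cite{ms}. So the relevant comparison is between your outline and the proof in those references, part of which the paper extracts as Lemma~\ref{lemma2.2}.

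Your plan has a genuine gap, and you have put your finger on exactly where it is. The surgery sequence you describe reduces $|\partial D \cap \partial E|$ at each step, so the number $n$ of steps is governed by the initial intersection number of the two disk boundaries, which can be arbitrarily large. No amount of ``careful bookkeeping'' will make this sequence uniformly short: two essential disks in a compression body can honestly be very far apart in the surgery graph. What you need is not a short path between arbitrary $D$ and $E$, but a single \emph{center} that every disk projects near.

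That is how Li's argument actually runs. One fixes once and for all a disk $D$ minimizing $|\partial D \cap \partial F|$ (intersection with $\partial F$, not with another disk), and then shows that for \emph{any} essential disk $E$ there is an arc of $\partial E \cap F$ disjoint from some arc of $\partial D \cap F$; this is precisely the content of Lemma~\ref{lemma2.2}. The minimality of $D$ against $\partial F$ is what makes the outermost-bigon analysis terminate in a single step rather than in $|\partial D \cap \partial E|$ steps: an outermost arc of $D\cap E$ in $E$ cuts off a ``triangle'' or ``quadrilateral'' whose boundary arc in $\partial E$ must already be short relative to $\partial F$, else a boundary compression would contradict minimality of $D$. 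The $I$-bundle alternative arises when $|\partial D \cap \partial F|$ is very small (at most $4$), not from the surgery machine jamming between two generic disks. So the missing idea is to pivot from comparing $D$ to $E$ directly, and instead compare every disk to one extremal disk chosen against $\partial F$.
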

\begin{note}
In Lemma \ref{lemma2.1},  if $V$ is a twisted I-bundle of $F$,  then the vertical boundary of this $I$-bundle is non separating. 
\end{note}

Let $N=\min\{\mid \partial D\cap \partial F\mid D:~ an~ essential~ disk~of~V\}$.  Suppose $D$ realizes the minimum $N$.  
Then there is a more interesting result in Li's proof of Lemma \ref{lemma2.1}.

\begin{lem}
\label{lemma2.2}
Let $V$, $S$ and $F$  be the same ones in Lemma \ref{lemma2.1}. If $N>4$, then for any  essential disk $E$ of $V$ with $\partial E\subset S$,  there is a component of $\partial E\cap F$  disjoint from a component of $\partial D\cap F$. 
 \end{lem}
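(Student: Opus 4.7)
The plan is to argue by contradiction: suppose every component of $\partial E\cap F$ meets every component of $\partial D\cap F$ in $F$, and deduce $N\leq 4$.

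First I would put $\partial D,\partial E,\partial F$ in pairwise minimum position in $S$ and make $D,E$ transverse in $V$. The intersection $D\cap E$ is a collection of disjoint arcs in the disk $E$; choose an outermost such arc $\alpha$, cutting off a subdisk $E_{0}\subset E$ whose boundary is $\alpha\cup\beta$, where $\beta\subset\partial E$ and the interior of $E_{0}$ is disjoint from $D$. The central observation is: if $\beta$ contained any subarc lying in $F$ with both endpoints on $\partial F$, that subarc would be a full component of $\partial E\cap F$ (since $\beta\subset\partial E$) and would be disjoint from $\partial D$ (as the interior of $\beta$ misses $\partial D$). This is incompatible with our assumption that every $e_i$ meets every $d_j$. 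Hence $\beta$ has no such internal subarc in $F$, and a short case analysis on where the two endpoints of $\beta$ lie (in $F$ versus $S\setminus F$) forces $|\beta\cap\partial F|\leq 2$.

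Next, I perform disk surgery along $\alpha$: cutting $D$ along $\alpha$ produces two subdisks $D', D''$, and gluing two parallel copies of $E_{0}$ gives properly embedded disks $\tilde D_{1}=D'\cup E_{0}$ and $\tilde D_{2}=D''\cup E_{0}$ in $V$ with
$$|\partial\tilde D_{1}\cap\partial F|+|\partial\tilde D_{2}\cap\partial F|=|\partial D\cap\partial F|+2|\beta\cap\partial F|\leq N+4.$$
Both $\tilde D_{i}$ cannot be inessential simultaneously: if $\partial\tilde D_{i}$ bounded disks in $S$ for $i=1,2$, gluing those disks along $\beta$ would produce a disk in $S$ bounded by $\partial D$, contradicting the essentiality of $D$. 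So at least one, say $\tilde D_{1}$, is essential. If both are essential, the minimality of $N$ forces each summand to be at least $N$, hence $2N\leq N+4$, i.e. $N\leq 4$, contradicting $N>4$.

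The main obstacle is the remaining case in which exactly one of $\tilde D_{1},\tilde D_{2}$, say $\tilde D_{2}$, is inessential, so that $\partial\tilde D_{2}=\gamma_{2}\cup\beta$ bounds a disk $\Delta\subset S$. Since $\partial D$ and $\partial E$ are in minimum position, $\Delta$ cannot be a bigon between $\partial D$ and $\partial E$, so its interior must contain further intersection points of $\partial D\cap\partial E$, which correspond to endpoints of additional arcs of $D\cap E$ in $E$. The plan is to iterate: among those extra arcs in $E$ find another outermost subarc whose corresponding $\beta'$ lies in $\Delta$, and apply the same surgery/counting argument to force a genuinely essential surgered disk with fewer than $N$ intersections with $\partial F$, again contradicting the minimality of $N$ whenever $N>4$. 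Handling this inessential case cleanly—making sure the new outermost arc does produce an essential surgered disk—is the delicate step of the argument.
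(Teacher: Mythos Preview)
Your overall strategy matches the paper's: argue by contradiction, take an outermost arc of $D\cap E$ in $E$, observe that the outer boundary arc $\beta$ can meet $\partial F$ at most twice (this is exactly the ``triangle or quadrilateral'' dichotomy the paper quotes from Li), and then surger $D$ along the outermost subdisk. The paper, however, does not run a two-disk counting argument; it simply invokes Li's Lemma~3.4 in \cite{li3}, which shows that in each of the triangle and quadrilateral cases the surgery produces an \emph{essential} disk $D_{1}$ with $|\partial D_{1}\cap\partial F|<|\partial D\cap\partial F|$, contradicting minimality of $D$ directly.

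The gap you flag is genuine and is precisely the content of Li's case analysis that you are missing. In your version, when exactly one of $\tilde D_{1},\tilde D_{2}$ is inessential, the inequality $|\partial\tilde D_{1}\cap\partial F|+|\partial\tilde D_{2}\cap\partial F|\leq N+4$ together with $|\partial\tilde D_{1}\cap\partial F|\geq N$ only yields $|\partial\tilde D_{2}\cap\partial F|\leq 4$, which is no contradiction. Your proposed iteration (find another outermost arc inside the disk $\Delta\subset S$ bounded by $\partial\tilde D_{2}$) is not clearly well-founded: the new outermost subdisk need not be smaller in any useful sense, the same inessential branch can recur, and you have not exhibited a decreasing quantity. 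Li's argument avoids this loop by analyzing the triangle and quadrilateral configurations separately and choosing the surgery so that the resulting disk is forced to be essential (using in particular that $\partial F$ is disk-busting). To complete your proof you should either reproduce that case analysis or show directly that if $\partial\tilde D_{2}$ bounds a disk in $S$ then, since $\partial F$ is essential and disk-busting, one can replace $\tilde D_{2}$ and absorb its $\partial F$-intersections into an isotopy, yielding an essential $\tilde D_{1}$ with strictly fewer than $N$ intersections; as written, this step is not established.
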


\begin{proof}  
Suppose the conclusion is false. Then each component of $\partial D\cap F$ intersects every component of $\partial E\cap F$ nontrivially.  In Li's proof of Lemma 3.4 \cite{li3}, for $D$ and $E$, it is assumed that there is no cycle in their intersection. Then there is an outermost disk $\Delta$ in $E$ bounded by an arc $\delta \subset D\cap E$ and an arc $\delta^{'}\subset \partial E$.   Moreover, for  $\Delta$, there are only two types: a triangle or a quadrilateral.  Then Li \cite{li3} proved that for both of these two cases, there is a new disk $D_{1}$  so that $\mid\partial D_{1}\cap \partial F\mid < \mid \partial D\cap \partial F\mid$. But it contradicts the choice of $D$. 
\end{proof}

\section{The SE-position of an essential disk} 
\label{sec3}
Let $V$ be a nontrivial compression body with $F \subset \partial_{-}V$. 

If $\partial _{-}V =F$, then there are finitely many  disjoint and pariwise non isotopy essential disks $\cup_{i=1}^{s} B_{i}\subset V$ satisfying $F$-condition,  i.e.,  their complement in $V$ is $F\times I$. So there is a subsurface  $S_{F}=\overline{ S-\cup_{i=1}^{s} B_{i}}$  in $F\times I$, see Figure 3.1.

\begin{figure}[!htbp]
\label{figure3.1}
\includegraphics[totalheight=3cm]{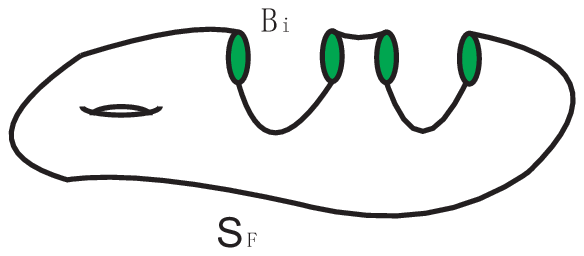}
\begin{center}
Figure 3.1.
\end{center}
\end{figure}

Let $r$ be an essential simple closed curve in $F$. Attaching a 2-handle along $r$ on $V$ (capping a possible 2-sphere by a 3-ball) produces a new compression body or handlebody, denoted by $V(r)$.  It is not hard to see that there are at least one more essential disks in $V(r)$ than $V$, for example, an essential disk $D$ containing $r$. Since each $B_{i} $ is  also an essential disk in $V(r)$,  it is interesting to know  how they intersect.

It is assumed that $D$ and $\cup_{i=1}^{s} B_{i}$ are in a general position. Then they intersect in some arcs or cycles. It is known that both a compression body and a handlebody are irreducible. Then there is no cycle in their intersection up to isotopy. So  $D\cap\cup_{i=1}^{s}B_{i}$ consists of finitely many arcs.  If $D\cap \cup_{i=1}^{s} B_{i}=\emptyset$, then $\partial D$ is strongly essential in $S_{F}$. For if not,  then $\partial D$ cuts out a planar surface in $S_{F}$. So $\partial D$ is a band sum of some components of $\partial S_{F}$.  Since $\partial S_{F}$ consists of  some essential disks' boundary  curve in $V$,  $D$ is a band sum of some essential disks in $V$. Therefore $D$ is an essential disk in $V$. It contradicts the fact that $D$ is not in $V$.  If $D\cap \cup_{i=1}^{s} B_{i}\neq \emptyset$,  then there is an outermost disk in $D$ so that it is bounded by an arc $\gamma \subset \partial D$ and an arc of $D\cap B_{i}$ for some $1\leq i \leq s$.  In this case,  $\gamma$ is either  strongly essential in $S_{F}$ or not.  If $\gamma$ is not strongly essential in $S_{F}$, then there is a boundary compression on $B_{i}$ along this outermost disk so that it produces  two essential disks $B_{i}^{1}$ and $B_{i}^{2}$.  By a standard argument,  there is at least one of them, says $B_{i}^{1}$,  so that $\{B_{1}, ... , B_{i-1}, B_{i}^{1}, B_{i+1},...B_{s}\}$ satisfies the $F$-condition. Then we consider the intersection between $D$ and  $\{B_{1}, ... , B_{i-1}, B_{i}^{1}, B_{i+1},...B_{s}\}$,  which has a fewer number intersection arcs.  If neither $D$ is disjoint from all these disks nor there is a strongly essential outermost disk in $D$,  then there is a boundary compression on these new disks again.  Since there are only  finitely many arcs between $D$ and $\cup_{i=1}^{s}B_{i}$,    finally either $D$ is strongly essential and disjoint from all these disks or there is a strongly essential outermost disk in $D$.  Then we say $D$ is in SE-position with respect to $\cup_{i=1}^{s} B_{i}$.
In general, for an essential disk $D$ in $V(r)$ but not in $V$,  there is also a SE-position for it in  $V(r)$ as follows. 

Let $\mathcal{A}$ be a collection of  all non separating spanning annuli in $V$,  where  none of their boundary curves lies in $F$. Let $\mathcal {B}$ be  the collection of all non separating essential disks of $V$.  Then there is an annu-disc system of $V$ in $\mathcal {A}\cup \mathcal {B}$,  says $\{A_{1}, A_{2},..., A_{l}, B_{1},...,B_{s}\}$  so that (1)   they are pairwise  disjoint; (2) their complement in $V$ is connected; (3) 
the complement of their boundary curves in $S$, denoted by$S_{l,s}$,  has genus $g(F)$,  $2[g(S)-g(F)]$ boundaries.  In this case, $l+s=g(S)-g(F)$.   

For any annu-disc system  $\{A_{1}, A_{2},..., A_{l}, B_{1},...,B_{s}\}$ of $V$,  either $D$ is disjoint from them or they intersect nontrivially.  Since $V$ is irreducible,  it is assumed that there is no cycle in their intersection.   In the later case,   their intersection consists of finitely many arcs. Then there is an outermost disk in $D$, which is bounded by 
some arc $\gamma\subset \partial D$ and some  arc $a_{1}$ in their intersection.  We call an annu-disc system  $\{A_{1}, A_{2},..., A_{l}, B_{1},...,B_{s}\}$ is tamed for $D$ if there is a component  $\gamma\subset \partial D\cap S_{l,s} $ so that (1) $\gamma$ lies in an outermost disk in $D$; (2) it is strongly essential in $S_{l,s}$ . Otherwise, it is   untamed.

If a given annu-disc system is untamed for $D$,  then  there is some  outermost disk and an arc $\gamma\subset \partial D \cap S_{l,s}$  so that $\pi_{S_{l,s}}(\gamma)$ bounds a disk in $V$. Then doing  a boundary compression along this outermost disk on $A_{i}$ (resp. $B_{j}$ ) produces a new non separating spanning annulus $A^{1}_{i}$ (resp. $B^{1}_{j}$).  It is known that $A^{1}_{i}$ shares the same boundary curve in $\partial_{-}V$ with $A_{j}$. Then there is a new annu-disc system $\{A_{1},..., A^{1}_{i},...,A_{l}, B_{1},...,B_{s}\}$ (resp. $\{A_{1},...A_{l}, B_{1},..B^{1}_{j},..B_{s}\}$). It is not hard to see that the intersection number between $D$ and the new annu-disc system is  less than before.  So we cyclically do this operation until  this annu-disc system is transformed into a tamed annu-disc system.

In all, for the essential disk $D$,  there is a tamed annu-disc system for it.  To  find a tamed disc system for $D$ in $V$,  there are some surgeries introduced on this tamed annu-disc system  \[\{A_{1}, A_{2},..., A_{l}, B_{1},...,B_{s}\}.\]  

For two spanning annuli  $A_{1}$ and $A_{2}$, which lie in the same component of $\partial_{-}V$,  there is an  arc  $a_{1,2}$ in $\partial _{-}V$ connecting them, whose interior is disjoint from this tamed annu-disc system.  So the I-bundle $a_{1,2}\times I$ connects $A_{1}$ and $A_{2}$ disjoint from this tamed annu-disc system.  If $\gamma$ is disjoint from $a_{1,2}\times I$,   then cutting the complement of this annu-disc system along  it produces a 3-manifold $V_{a_{1,2}}$. So the subsurface $S_{l,s}$ is cut into an essential subsurface $S_{l,s,a_{1,2}}$, see Figure 3.2.

\begin{figure}[!htbp]
\label{figure3.2}
\includegraphics[totalheight=4cm]{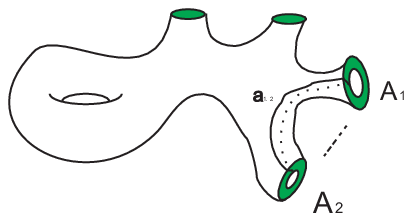}
\begin{center}
Figure 3.2.
\end{center}
\end{figure}

It is not hard to see that $\gamma\cap S_{l,s,a_{1,2}}$ is also  strongly essential in  $S_{l,s,a_{1,2}}$.  

Let $D(\gamma)$ be the disk bounded by  $\gamma$ and some arc in this annu-disc system in $V(r)$.
If  $\gamma$ intersects $a_{1,2}\times  I\cap S_{l,s}$ nontrivially, then  $D(\gamma)$ intersects  this I-bundle some arcs up to isotopy, where all these arcs have their ends in $a_{1,2}\times I \cap S_{l,s}$. Then there is an outermost disk in $D(\gamma)$ bounded by $\gamma_{1}$ and some arc in $a_{1,2}\times I$.  If this outermost disk is also in $V$,  then doing a boundary compression on $a_{1,2}\times I$ along it produces a I-bundle and a disk.  Here we still use $a_{1,2}\times I$ representing this  new I-bundle.  It is said that this new I-bundle has less intersection number with $D(\gamma)$.  Cyclically doing this operation until either $\gamma$ is disjoint from the resulted I-bundle or there is an outermost disk bounded by $\gamma_{1}$ and some arc in $D(\gamma)$ is in $V(r)$ but not in $V$.  In all, the strongly essential arc is denoted by $\gamma_{1}$. In this case $\gamma_{1}$ is strongly essential and bounds an essential disk in $V(r)$ not in $V$ with some arc in $\partial S_{l,s,a_{1,2}}$.   Let $A_{1,2}$ be the band sum of $A_{1}$ and $A_{2}$ along $a_{1,2}\times I$.  Then it  is also a spanning annulus in $V$. Furthermore, there is a collection of annuli and disks $\{A_{1,2}, A_{3},...A_{l}, B_{1},..., B_{s}\}$ so that $\gamma_{1}$ lies in the complement of it in $V$.

Cyclically doing the above operation until  there is no spanning annulus in this tamed annu-disc system.  Then at last it is transformed into a tamed disc system for $D$,  i.e., a collection of essential disk in $V$.  It is known that one component  of their complement in $V$ is $F\times I$. Let $S_{F}\subset S$ be the  component of their boundary's complement in $S$,  which lies in $F\times I$. Then  there is an arc $\gamma^{*} $  of $\partial D \cap S_{F}$  so that  it not only lies in an outermost disk in $D$ but also  is strongly essential in $S_{F}$. 

We summarize the above argument into a lemma as follows:
\begin{lem}
\label{lemma3.1}
For any essential disk $D$ in $V(r)$ but not in $V$, there are finitely many essential disks $\{B_{1},...,B_{s}\}$ of $V$ so that (1) one component of their complement in $V$ is $F\times I$; (2) the other components are  some closed surfaces I-bundles if possible; (3)  $D$ is in a SE-position with respect to $\cup_{i=1}^{s}B_{i}$, i.e.,  for some component $\gamma^{*}\subset \partial D\cap S_{F}$, $\pi_{S_{F}}(\gamma^{*})$ not only is strongly essential in $S_{F}$ but also bounds an essential disk in $V(r)$.

\end{lem}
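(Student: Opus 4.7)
The plan is to formalize the two-stage surgery construction sketched in the discussion preceding the lemma. I begin with any annu-disc system
\[
\{A_1,\dots,A_l,B_1,\dots,B_s\}
\]
of $V$ and isotope $D$ into general position so that $D\cap(\bigcup A_i\cup\bigcup B_j)$ consists only of arcs (cycles are ruled out by irreducibility of both $V$ and $V(r)$). Then I run two nested finite iterations---outermost-disk boundary compressions to \emph{tame} the system for $D$, followed by band-sums to \emph{eliminate the spanning annuli}---each step strictly reducing a natural complexity.

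For the taming step, I consider an outermost disk $\Delta\subset D$ cut off by an arc $\gamma\subset\partial D\cap S_{l,s}$. If $\gamma$ is not strongly essential in $S_{l,s}$, then $\pi_{S_{l,s}}(\gamma)$ bounds an essential disk in $V$, and a boundary-compression on the annulus or disk of the system hit by $\Delta$ along $\Delta$ produces a replacement annulus/disk with strictly fewer intersections with $D$. Since the intersection number is a nonnegative integer, this halts in a tamed annu-disc system carrying a strongly essential outermost witness arc $\gamma$.

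For annulus elimination, I pick two spanning annuli $A_1,A_2$ lying in the same component of $\partial_-V$ and connect them by an arc $a_{1,2}\subset\partial_-V$ whose interior is disjoint from the rest of the system. If the current witness arc $\gamma$ misses the band $a_{1,2}\times I$, I replace $A_1,A_2$ by their band sum $A_{1,2}$; the cut subsurface $S_{l,s,a_{1,2}}$ still contains $\gamma$ as a strongly essential arc, and its outermost disk still cobounds an essential disk in $V(r)\setminus V$. If $\gamma$ meets the band, I run an inner outermost-disk recursion on $D(\gamma)\cap(a_{1,2}\times I)$: each inner outermost piece either lets me $\partial$-compress the band and keep $\gamma$, or locates a new strongly essential arc $\gamma_1$ whose witness disk is again in $V(r)\setminus V$. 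After at most $l$ such band-sums no spanning annulus remains.

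What results is a tamed disc system $\{B_1,\dots,B_s\}$ together with an arc $\gamma^*\subset\partial D\cap S_F$ lying in an outermost disk of $D$, strongly essential in $S_F$ and cobounding an essential disk of $V(r)$ not in $V$. The complement of $\bigcup B_i$ in $V$ has the $F\times I$ piece adjacent to $F$ by the $F$-condition, while the pieces adjacent to the other components of $\partial_-V$ are closed-surface $I$-bundles by construction, giving (1) and (2). The hardest step is the bookkeeping in the annulus-elimination stage: I must simultaneously verify that strong essentiality of the witness arc survives the band-sum and that the disk it cobounds remains genuinely new (in $V(r)\setminus V$, not in $V$). The inner outermost-disk recursion handles both requirements, but it forces one to pass to a new witness $\gamma_1$ whenever the $\partial$-compression would otherwise move the witness disk back into $V$, and this is the step that requires the most care.
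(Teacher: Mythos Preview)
Your proposal is correct and follows essentially the same approach as the paper: the two-stage procedure of first taming an annu-disc system via outermost-disk boundary compressions, then eliminating spanning annuli via band-sums along $a_{1,2}\times I$ (with the inner outermost-disk recursion to handle the case where $\gamma$ meets the band and to keep the witness disk in $V(r)\setminus V$), is exactly the argument given in Section~\ref{sec3} preceding the lemma. Your identification of the delicate point---tracking strong essentiality of the witness arc and genuineness of its cobounding disk through each band-sum---is also the same emphasis the paper places on the passage from $\gamma$ to $\gamma_1$.
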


\section{An upper bound on distance degenerate handle additions }
\label{sec4}
Suppose $V\cup_{S}W$ has distance $m\geq 3$.  An essential separating disk $B\subset V$ is  called a $F$-disk if  one component of $\overline{V-B}$ is  $F\times I$.   Let \[\mathcal {N}=\min\{ \mid B\cap E\mid  \vert E:~ an~ essential ~disk ~in ~W;~B: ~a ~F-disk\}.\]  Then there are an essential disk $E\subset W$ and a $F$-disk $B\subset V$ so that $\mathcal {N}=\mid B\cap E\mid$. 

Since cutting $V$  along a $F$-disk $B$ produces a  closed surface I-bundle $F\times I$,  there is a component $S_{1} \subset S-\partial B$  in $F\times I$.  It has been  discussed that $\partial E$ intersects $S_{1}$ nontrivially.  So the subsurface projection $\pi_{S_{1}}(\partial E)$ is an essential simple closed curve in $S_{1}$.  Since $S_{1}$ lies in $F\times I$, there is an essential simple closed curve $c\subset F$ so that the  union of $c$ and $\pi_{S_{1}}(\partial E)$ bound a spanning annulus in $V$. Moreover, $c$ is unique up to isotopy.  Therefore to get an upper bound for all distance degenerate curves in $F$,  it is sufficient to give an upper bound of  distances between all these degenerate curves  and $c$ in $\mathcal {C}(F)$.  More precisely,  let $r\subset F$ be a distance degenerate curve  for  $V\cup_{S}W$. It is known that $d_{\mathcal{C}(F)}(r, c)\leq d_{\mathcal {C}(F)}(r, \gamma_{l})+d_{\mathcal {C}(F)}(\gamma_{l}, b)+d_{\mathcal{C}(F)}(b,c)$. We will give an upper bound of $d_{\mathcal {C}(F)}(r, \gamma_{l})$ in Subsection 4.1, an upper bound of $d_{\mathcal{C}(F)}(b,c)$  in Subsection 4.2 and an upper bound of $d_{\mathcal {C}(F)}(\gamma_{l}, b)$ in Subsection 4.3. Then they together give an upper bound of $d_{\mathcal{C}(F)}(r, c)$.

\subsection{$d_{\mathcal {C}(F)}(r, \gamma_{l})\leq 2l\log_{2}{4[g(S)-g(F)]}+l+1$}
Since $r\subset F$ is a  distance degenerate curve for  $V\cup_{S}W$,  $d(V(r), W)=l\leq m-1$.  By  the definition of a Heegaard distance,  there is a collection of finitely many essential simple closed curves on $S$, says $\{\alpha_{0},...,\alpha_{l}\}$, so that  (I) $\alpha_{0}$ (resp. $\alpha_{l}$)  bounds an essential disk  $D_{0}$ (resp. $E_{l}$ ) in $V(r)$ (resp. $W$);  (II) for any $ 1\leq i\leq l$,  $\alpha_{i}$ is disjoint from $\alpha_{i-1}$.  Here $D_{0}$ is an essential disk in $V(r)$ but not in $V$. For if not,  then  $d(V, W)\leq l<m$.  Then by Lemma \ref{lemma3.1}, there are finitely many  essential disks $\{B_{1}, ..., B_{s}\}$ in $V$ so that either $\alpha_{0}$ is disjoint from all  these  disks or there is a strongly essential outermost  arc $\gamma\subset \alpha_{0}$ so that $\gamma$ and one arc in $D_{0}\cap B_{i}$, for some $1\leq i \leq s$,  bounds an essential disk.  In the first case,  $\alpha_{0}$ is strongly essential in $S_{F}$.  In the later case,  $\gamma$ is strongly essential in $S_{F}$ and so is  $\pi_{S_{F}}(\gamma)$.  So in both of these two cases, there is an essential disk  bounded by $\pi_{S_{F}}(\gamma)$ or $\alpha_{0}$ in  the component of $V(r)-\cup_{i=1}^{s} B_{i}$, says $V_{F}(r)$.  For simplicity,  they are both denoted by $\pi_{S_{F}}(\gamma)$.  

Since $V(r)$ is obtained from attaching a 2-handle along $r$ on $V$,   there is also an essential disk bounded by $r^{*}$ in $V_{F}(r)$, which bounds a spanning annulus $\mathcal {A}_{r}$ with $r$ in $V$.  So how  does $r^{*}$ intersect $\pi_{S_{F}}(\gamma)$ in $V_{F}(r)$?
 \begin{lem}
\label{lemma4.1}
\begin{enumerate}
\item If $r$ is separating in $F$, then for some choice of $r^{*}$, $\pi_{S_{F}}(\gamma)$  is isotopic to $r^{*}$ in $V_{F}(r)$;
\item If $r$ is non separating  in $F$,  then for some choice of $r^{*}$, $\pi_{S_{F}}(\gamma)$ is disjoint from $r^{*}$ in $V_{F}(r)$.
\end{enumerate}
\end{lem}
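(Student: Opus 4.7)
The plan is to compare the essential disk $D_\gamma \subset V_F(r)$ with $\partial D_\gamma = \pi_{S_F}(\gamma)$ against the \emph{standard disk} $\Delta := D_r \cup (r \times I)$, where $D_r$ is the core of the attached 2-handle and $r \times I$ is the vertical annulus above $r$ in $F \times I$; by construction $\partial \Delta = r^*$. Both conclusions will emerge after minimizing $|D_\gamma \cap \Delta|$ by an outermost-arc surgery inside $V_F(r)$, with the separating versus non-separating dichotomy arising from the topology of $V_F(r)$ cut along $\Delta$.

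First I would place $D_\gamma$ and $\Delta$ in general position with minimal intersection. Simple closed curves in $D_\gamma \cap \Delta$ are eliminated by the irreducibility of $V_F(r)$ via an innermost-disk argument, reducing the intersection to arcs. If arcs remain, take an outermost arc $\alpha \subset \Delta$ cutting off $\Delta' \subset \Delta$ with $\partial \Delta' = \alpha \cup \beta$ for some $\beta \subset r^*$; this same arc $\alpha$ splits $D_\gamma$ into $D_\gamma^{(1)} \cup D_\gamma^{(2)}$, partitioning $\partial D_\gamma$ into arcs $\gamma_1, \gamma_2$. The band-summed disks $E_j := D_\gamma^{(j)} \cup \Delta'$ lie in $V_F(r)$ with $\partial E_j = \beta \cup \gamma_j \subset S_F$ and have strictly fewer intersections with $\Delta$ than $D_\gamma$.

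Next I would establish essentiality: if both $\partial E_1$ and $\partial E_2$ bounded disks in $S_F$, gluing those two disks along the shared arc $\beta$ would yield a disk on $S_F$ bounded by $\gamma_1 \cup \gamma_2 = \partial D_\gamma$, contradicting the strong essentiality of $\pi_{S_F}(\gamma)$. So at least one $E_j$ is still essential; iterating, $D_\gamma$ is eventually isotoped in $V_F(r)$ until $D_\gamma \cap \Delta = \emptyset$, so $\partial D_\gamma$ and $r^*$ are disjoint on $S_F$. For case (2), $r$ non-separating, this is exactly the claim.

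In case (1), $r$ separating, $\Delta$ separates $V_F(r)$ into $N_1 \sqcup N_2$, and $D_\gamma$ lies in one piece $N_i$. The key structural step is to identify $N_i \cong \hat F_i \times I$, where $\hat F_i$ denotes $F_i$ capped by a disk along $r$: the half 2-handle, a 3-ball attached to $F_i \times I$ along a collar annulus of $\partial F_i$ in $F_i \times \{0\}$, can be isotoped in $\partial(F_i \times I)$ to an attachment along the vertical annulus $r \times I$, which by definition realizes $\hat F_i \times I$. Since $r$ is essential, $\hat F_i$ has positive genus, so $\hat F_i \times I$ admits no essential disks; hence $\partial D_\gamma$ must bound a disk on $\partial N_i$. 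This disk cannot lie entirely in $F_i \times \{1\}$ (else $\pi_{S_F}(\gamma)$ is inessential in $S_F$), so it must cross the cut face $\Delta_i$, forcing $\partial D_\gamma$ to be peripheral in $F_i \times \{1\}$ and therefore isotopic to $r^*$ on $S_F$. The main obstacle I foresee is making the identification $N_i \cong \hat F_i \times I$ fully rigorous, especially tracking the annular attaching region through the corner-smoothing and isotopy; the surgery and essentiality steps are otherwise routine.
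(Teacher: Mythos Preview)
Your surgery argument has a genuine gap. When you take an outermost subdisk $\Delta' \subset \Delta$ and replace $D_\gamma$ by one of the band-summed disks $E_j = D_\gamma^{(j)} \cup \Delta'$, this is \emph{not} an isotopy of $D_\gamma$; the boundary curve $\partial E_j$ is in general different from $\pi_{S_F}(\gamma)$. Your sentence ``iterating, $D_\gamma$ is eventually isotoped in $V_F(r)$ until $D_\gamma \cap \Delta = \emptyset$'' is precisely where the argument fails. You correctly show that at least one $E_j$ is essential, but when \emph{both} are essential (which does happen --- $V_F(r)$ has infinitely many isotopy classes of essential disks once $g(F)\geq 2$, obtained for instance by band-summing $\Delta$ with itself along arcs), choosing one of them and continuing tells you only about the terminal disk $D^{(n)}$, not about the original curve $\pi_{S_F}(\gamma)$. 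So neither conclusion~(1) nor~(2) follows.

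The paper avoids this by reversing the roles of the two disks. For case~(1) it argues directly that $V_F(r)$, being two closed surface $I$-bundles joined along a single 1-handle, has a \emph{unique} essential disk up to isotopy; hence $D_\gamma$ is forced to be isotopic to $\Delta$ with no surgery needed. For case~(2), it takes an outermost arc in $D_\gamma$ (not in $\Delta$): the resulting bigon $\Omega \subset D_\gamma$ is then a properly embedded disk in the product $\overline{V_F(r)\setminus D_r}\cong \hat F\times I$, and an intersection-count argument shows the arc $\eta = \partial\Omega \cap \partial_+ V_F(r)$ is strongly essential there, so $\Omega$ is an essential disk in a product --- a contradiction. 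This yields $D_\gamma\cap\Delta=\emptyset$ without ever altering $\partial D_\gamma$. Your structural identification $N_i\cong \hat F_i\times I$ is correct and matches the paper's picture; the repair you need is to run the outermost-arc argument on $D_\gamma$ rather than on $\Delta$, so that each step is either a genuine isotopy of $D_\gamma$ (when the bigon is boundary-parallel) or an immediate contradiction (when it is essential in the product).
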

\begin{proof}
If $r$ is separating in $F$, then $V_{F}(r)$ contains only one essential disk $D_{r}$ up to isotopy. For if not,  then there is another essential disk in $V_{F}(r)$.  In this case,  either it is disjoint from the disk bounded by $r^{*}$ or it intersects $D_{r}$ nontrivially. But since $V_{F}(r)$ is homeomorphic to two closed surface I-bundles linked by a 1-handle,  it is impossible.  It is known that $\pi_{S_{F}}(\gamma)$  bounds an essential disk in $V_{F}(r)$. So $\pi_{S_{F}}(\gamma)$ is isotopic to $r^{*}$.

If $r$ is not separating in $F$, then $V_{F}(r)$ contains an  essential non separating disk $D_{r}$. Since $\pi_{S_{F}}(\gamma)$  also bounds an essential disk $D_{1}$  in $V_{F}(r)$, either $D_{1}$ is disjoint from $D_{r}$ or they intersects nontrivially.   In the former case, $\pi_{S_{F}}(\gamma)$  is disjoint from $r$.  In the later case,  it is assumed that there is no circle in their intersection. Then it consists of some arcs. Therefore there is an outermost disk in $D_{1}$, which is bounded by a component $\eta\subset \partial D_{1}$ and some arc in $D_{r}\cap D_{1}$. 

Cutting $\partial_{+} V_{F}(r)$ along $\partial D_{r}$ produces a compact surface $S_{V_{F}(r), \partial D_{r}}$, whose boundary curves are two copies of $\partial D_{r}$.  Then $\eta\subset  S_{V_{F}(r), \partial D_{r}}$ is strongly essential and  its two ends lie in the same copy of $\partial D_{r}$, says $\partial D_{r}^{1}$.   For if not, then it cuts out an annulus in $S_{V_{F}(r), \partial D_{r}}$,  which contains $\partial D_{r}^{2}$ as one boundary. 
It is not hard to see that for  any arc of $\pi_{S_{F}}(\gamma)\cap S_{V_{F}(r), \partial D_{r}} $,  if it has one end in $\partial D_{r}^{2}$, then the other end of it  is in $\partial D_{r}^{1}$, see Figure 4.1.

\begin{figure}[!htbp]
\label{figure4.1}
\includegraphics[totalheight=3cm]{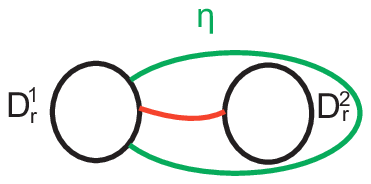}
\begin{center}
Figure 4.1.
\end{center}
\end{figure}

 However, the existence of $\eta$  shows that there are at least two more points in $\pi_{S_{F}}(\gamma)\cap \partial D_{r}^{1}$ than  in $\pi_{S_{F}}(\gamma)\cap \partial D_{r}^{2}$. So it contradicts the fact that $\partial D_{r}^{1}$  is isotopic to $\partial D_{r}^{2}$.  In this case,  $\pi_{ S_{V_{F}(r), \partial D_{r}}}(\eta)$ bounds an essential disk in $\overline {V_{F}(r)-D_{r}}$. But  $\overline {V_{F}(r)-D_{r}}$ is  a closed surface I-bundle and so contains no essential disk, a contradiction.
\end{proof}

So  $\pi_{S_{F}}(\gamma)$  is disjoint from $r^{*}$ after some isotopy. For simplicity,  $\pi_{S_{F}}(\gamma)$ is abbevirated by $\gamma_{0}^{*}$.

 For any $1\leq i \leq l$,  by the definition of Heegaard distance,  $\alpha_{i}$ intersects every essential disk of $V$ nontrivially.   
\begin{lem}
\label{lemma4.2}
Let $F$ be a genus at least one proper subsurface of $S$, i.e., essential but not isotopic, so that each component of $\partial F$ bounds an essential disk of $V$.  Then for any essential simple closed curve $\alpha\subset S$, if  $\alpha$ is disk busting in $V$, then there are $t\leq 2[g(S)-g(F)]$ arcs $\{a_{1}, ...,a_{t}\}$ of $\alpha\cap {F}$ so that one boundary curve of  $\overline{N[\partial {F}\cup(\cup_{i=1}^{t} a_{i})]}$ is strongly essential in $F$, says $\gamma^{*}$. 
\end{lem}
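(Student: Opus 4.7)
Plan: The plan is to use an iterative surgery argument combined with a disk-busting contradiction, closely parallel in spirit to the outermost-disk replacement of Lemma \ref{lemma2.2}. First, I would note that $t=0$ does not suffice in general: $\overline{N(\partial F)}$ is only a collar of $\partial F$, so its interior boundary curves are isotopic to components of $\partial F$ and hence peripheral in $F$; they are not even essential, let alone strongly essential. So the whole content of the lemma is that a small enough subset of $\alpha \cap F$ already produces a strongly essential curve.

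The next step is to establish that the full arc set $\alpha \cap F$ succeeds. Suppose, for contradiction, that every interior boundary curve of $\overline{N[\partial F \cup (\alpha \cap F)]}$ is either peripheral in $F$ or bounds a planar subsurface of $F$. Then $F \setminus \overline{N[\partial F \cup (\alpha \cap F)]}$ is a disjoint union of planar pieces and annuli parallel to components of $\partial F$. Taking an innermost planar piece $R$, the curve $\partial R \subset F$ is built from sub-arcs of $\alpha$ and sub-arcs of $\partial F$. Using the disks $D_{1},\ldots,D_{k}$ of $V$ bounded by the components of $\partial F$, I would cap the $\partial F$-portions of $\partial R$ by sub-disks of the $D_i$'s and close up through the planar region $R$ to produce an essential disk of $V$ whose boundary is disjoint from $\alpha$, contradicting the disk-busting hypothesis. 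This yields the sought strongly essential $\gamma^{*}$.

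To minimize the arc count, I would greedily discard arcs: remove an arc from the current arc collection whenever its deletion still leaves $\overline{N[\partial F \cup (\bigcup a_i)]}$ with some strongly essential interior boundary. Let $\{a_{1},\ldots,a_{t}\}$ denote the terminal (minimal) collection. By construction, every $a_i$ appears as a sub-arc of some strongly essential boundary $\gamma^{*}$, so $t$ is bounded by the number of arc-segments of $\gamma^{*}$. To turn this into the bound $t \leq 2[g(S)-g(F)]$, I would complete $\{D_{1},\ldots,D_{k}\}$ to a non-separating cut system for $V$; since $F$ has genus $g(F)$ and $\partial F$ caps to a closed genus-$g(F)$ surface in $V$, the extra essential disks needed to finish the cut system are governed by $g(S)-g(F)$, and each such disk contributes at most $2$ arc-segments to $\gamma^{*}$ (one per side of the disk meeting $\alpha$ in $F$), giving the factor $2$.

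The main obstacle will be the final counting step. Turning the qualitative statement that $\gamma^{*}$ has ``at most $2[g(S)-g(F)]$ arc-segments'' into a rigorous Euler-characteristic bookkeeping is the crux: one must match each arc $a_i$ in the minimal support with a handle of $S$ not already carried by $F$, and verify that the matching is at most $2$-to-$1$. I would try to make this precise by comparing $\chi(\overline{N[\partial F \cup (\bigcup a_i)]})$ with $\chi(F)$ and $\chi(S)$, using the fact that $\gamma^{*}$ being strongly essential forces $F \setminus \overline{N}$ to have a non-planar component, so the ``genus budget" on the $\overline{N}$ side cannot exceed $g(S)-g(F)$ handles, each contributing two boundary incidences through $\alpha$.
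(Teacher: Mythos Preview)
Your existence argument (that the full arc system $\alpha\cap F$ produces a strongly essential boundary curve, else one builds a disk in $V$ missing $\alpha$) is essentially the same contradiction the paper runs, so that part is fine. The genuine gap is in your counting step, and it is exactly the place you flagged as the ``main obstacle.''

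Your proposed mechanism for the bound --- completing $\{D_{1},\dots,D_{k}\}$ to a full cut system of $V$ and charging each extra disk with ``at most two arc-segments of $\gamma^{*}$'' --- does not work as stated. The additional cut-system disks live on the \emph{other} side of $\partial F$ (in the complement of the $F\times I$ piece), so their boundaries do not meet $F$ at all and cannot account for segments of $\gamma^{*}\subset F$. Likewise, your Euler-characteristic sketch (``the genus budget on the $\overline{N}$ side cannot exceed $g(S)-g(F)$ handles'') conflates genus in $F$ with genus in $S\setminus F$; there is no reason a minimal supporting collection of arcs, viewed only through $\chi$, should see the difference $g(S)-g(F)$.

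The paper's counting is both different and much simpler, and it is the idea you are missing. After disposing of the case where some single arc of $\alpha\cap F$ is already strongly essential (then $t=1$), the paper observes that in the remaining case every arc of $\alpha\cap F$ joins two \emph{distinct} components of $\partial F$ (otherwise one immediately builds a disk of $V$ missing $\alpha$, contradicting disk-busting). One then takes a minimal subcollection of such arcs supporting a strongly essential $\gamma^{*}$. Because each $a_{i}$ joins two distinct boundary components, cutting $F$ along $a_{i}$ merges two boundary circles into one, so the number of boundary components drops by exactly one per arc. Hence $t$ is bounded by the number of components of $\partial F$, and in this setting $|\partial F|\le 2[g(S)-g(F)]$. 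That is the whole count: no cut-system completion, no Euler-characteristic bookkeeping, just ``arcs between distinct boundaries reduce the boundary count.'' If you rework your minimization so that the surviving arcs are of this type, the bound $t\le 2[g(S)-g(F)]$ follows immediately.
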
	
\begin{proof} 
If $\partial F$ is connected, then every component of $\alpha\cap F$ is strongly essential.  Let $\gamma^{*}$ be an arbitrary one essential  curve of $\overline{N(\partial F\cup \alpha)}$ in $F$. So we assume that $\partial F$ is not connected.  If one component of $\alpha \cap F$ is strongly essential in $F$, then let 
$a_{1}$ be  the one. So $t=1$. Otherwise, none of $\alpha\cap F$ is strongly essential.  So there is at least one arc of $\alpha\cap F$ connecting two different boundary curves of $F$. For if not, then each arc of $\alpha\cap F$ cuts out a planar surface in $F$.  So there is an  essential boundary curve of $\overline {N[\partial F\cup (\alpha\cap F)]}$, denoted by $C$, so that it cuts out a planar surface in $F$.  Then $C$ bounds an essential disk in $V$. By the construction of $C$, it is disjoint from $\alpha$. Therefore $\alpha$ is not disk busting in $V$.  

Since $F$ has a finite genus and finitely many boundary curves, there are finitely many disjoint but nonisotopic  essential arcs in $\alpha\cap F$, says $\{a_{1}, ...,a_{t}\}$, so that each of them  connects two different boundary curves of $F$. Then  one boundary curve of  $\overline{N(\partial F\cup(\cup_{i=1}^{t} a_{i}))}$ is strongly essential in $F$.  For if not,  then for any choice of  these essential  arcs in $\alpha\cap F$,  there is an essential but not strongly essential  simple closed curve $C\subset F$ so that it  is disjoint from them.
For any two essential simple closed curves $C_{1}$ and $C_{2}$,  there  is a partial order $<$ defined. We say $C_{1}<C_{2}$ if $C_{1}$ is essential in the planar surface bounded by the union of $C_{2}$ and $\partial F$.  So for any sequence of  essential simple closed curves as above, there is a maximal one, denoted by  $C $.  Moreover $C$ is disjoint from $\alpha$. For if not,  then $C$ intersect $\alpha$ nontrivially. Since $C$ is  a union of some components $\alpha\cap F$ and some boundary arcs of $F$,  
$\alpha$ intersects these boundary arcs nontrivially. It means that  there is some arc $a$ of $\alpha\cap F$  which is not contained in the planar surface bounded by $C$ and $\partial F$. Then there is an essential but not strongly essential simple closed curve $C_{*}$ in $\partial \overline{N(\partial F\cup a\cup C)}$  so that  $C$ is essential in the planar surface bounded by $C_{*}$ and $\partial F$. Then it contradicts the maximality of $C$.  Since $C$ is a band sum of $\partial F$,  $C$ bounds an essential disk in $V$.  So $\alpha$ is not disk busting.

Since $F$ is an essential subsurface of $S$,  $\partial F$ has at most $2[g(S)-g(F)]$ components.  If $\gamma^{*}$ is the $\pi_{F}(\alpha)$,  then $t=1\leq 2[g(S)-g(F)]$.  Otherwise,  there are some pairwise  disjoint and nonistopic  arcs $\{a_{1},...a_{t}\}$  in $F$ so that $\gamma^{*}$ is a boundary component of  $\overline{N(\partial F\cup(\cup_{i=1}^{t} a_{i}))}$,  where $t$ is minimal. 
Since cutting $F$ along $a_{i}$ once reduces the number of $ \partial F$ by one,  the extreme case is that $F-\cup_{i=2}^{t} a_{i}$ is connected. Then  $t\leq 2[g(S)-g(F)]$.

\end{proof}

Since $\partial S_{F}$ consists of finitely many disks' boundary curves, for any $1\leq i\leq l$, by Lemma \ref{lemma4.2}, there is a strongly essential simple closed curve  $\gamma_{i}^{*}$ for $\alpha_{i}$ in $S_{F}$.  Since $S_{F}$ lies in $F\times I$, for any $0\leq i \leq l$, there is an essential simple closed curve $\gamma_{i}$ so that the union of $\gamma_{i}$ and $\gamma_{i}^{*}$ bound a spanning annulus $\mathcal {A}_{i}$ in $V$. 

 By Lemma \ref{lemma4.1}, $\gamma_{0}^{*}$ is disjoint from $r^{*}$ in $V_{F}(r)$.   Since $\gamma_{0}^{*}\cup \gamma_{0}$ (resp. $r^{*}\cup r$) bounds a spanning annulus $\mathcal {A}_{0}$ (resp. $\mathcal {A}_{r}$), the intersection number $\gamma_{0}\cap r$ is not larger than $\gamma_{0}^{*}\cap r^{*}$.  So $\gamma_{0}$ is disjoint from $r$ in $F$.   For  any $1\leq i\leq l$, since $\alpha_{i}\cap \alpha_{i-1}\neq \emptyset$, by Lemma \ref{lemma4.2}, $\gamma_{i}^{*}$  intersects  $\gamma_{i-1}^{*}$ in at most $2[g(S)-g(F)]$ points in $S_{F}$ and therefore $\gamma_{i}$ intersects $\gamma_{i-1}$ in at most $2[g(S)-g(F)]$ points in $F$.  By Lemma \ref{lemma2.0},  $d_{\mathcal {C}(F)}(\gamma_{i}, \gamma_{i-1})\leq 2\log_{2} {4[g(S)-g(F)]} +1$.  So $d_{\mathcal {C}(F)}(r, \gamma_{l})\leq 2l\log_{2}{4[g(S)-g(F)]}+l+1$. Thus to get an upper bound of the distance between $r$  and $c$ in $\mathcal {C}(F)$, it is sufficient to give an upper bound of  $d_{\mathcal {C}(F)}(c, \gamma_{l})$.

By Lemma \ref{lemma4.2},  there are at most $t\leq 2[g(S)-g(F)]$ arcs $\{a_{1},...a_{t}\}$ of $\partial E\cap S_{F}$ so that 
one boundary curve of $\partial \overline{N[\partial S_{F}\cup (\cup_{i=1}^{t} a_{i})]}$, says $\beta$,  is strongly essential in $S_{F}$. Moreover, there is an essential simple closed curve $b$ in $F$ so  that $\beta\cup b$ bound a spanning annulus in $V$. Thus to get an upper bound of $d_{\mathcal {C}(F)}(c, \gamma_{l})$,  it is enough to give these two estimations of $d_{\mathcal {C}(F)}(c, b)$ and $d_{\mathcal {C}(F)}(b, \gamma_{l})$.

\subsection{ $d_{\mathcal {C}(F)}(c, b)\leq2\log_{2}2\mathcal{N}+1$} 

Since $\partial D \cap \partial E=2\mathcal {N}$,  $\pi_{S_{1}}(\partial E)$ intersects $\partial E$ in at most $2\mathcal {N}$ points. So is $\pi_{S_{1}}(\partial E)\cap (\cup_{i=1}^{t}  a_{i}))$.  If $\pi_{S_{1}}(\partial E)$ is contained in $S_{F}$, then $\pi_{S_{1}}(\partial E) $ intersects $\beta$ in  at most $2\mathcal {N}$ points. But  when $\pi_{S_{1}}(\partial E)$ intersects $\partial S_{F}$ nontrivially,  it becomes  more subtler.  For explaining it,  there is a lemma introduced.
\begin{lem}
\label{lemma4.3}
There are two essential simple closed curve $\beta^{*}\subset S$, $c^{*}\subset S$ and $b\subset F$  so that 
\begin{enumerate}
\item  $c^{*}\cup c$ (resp. $\pi_{S_{1}}(\partial E)\cup c$) bounds  a spanning annulus in $V$;
\item  $\beta^{*}\cup b$ (resp. $\beta\cup b$) bounds a spanning annulus in $V$; 
\item  $\mid \beta^{*}\cap c^{*}\mid \leq 2\mathcal {N}$.
\end{enumerate}
\end{lem}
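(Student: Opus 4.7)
The plan is to take $c^* := \pi_{S_1}(\partial E) \subset S_1 \subset S$ and $\beta^* := \beta \subset S_F \subset S$ as the natural candidates. Property (1) then follows because $c$ is defined precisely so that $c^* \cup c$ bounds a spanning annulus in the $F \times I$ component of $V$ cut along $B$, and (2) follows similarly from the construction of $b$ in the $F \times I$ component of $V$ cut along $\{B_1, \ldots, B_s\}$ obtained from Lemma \ref{lemma3.1}. Both choices are essentially forced: in a compression body, a curve on $\partial_- V$ determines its spanning-annulus partner on $\partial_+ V$ up to isotopy, so the flexibility in picking $c^*$ and $\beta^*$ is limited to isotopy in $S$.

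The main work is to establish (3). I would represent each curve as a band sum of disk boundary curves along arcs of $\partial E$: $\pi_{S_1}(\partial E)$ is a component of $\partial \overline{N(\partial S_1 \cup a)}$ for some arc $a$ of $\partial E \cap S_1$, and $\beta$ is a component of $\partial \overline{N(\partial S_F \cup (\cup_{i=1}^{t} a_i))}$ for arcs $a_i$ of $\partial E \cap S_F$. Transverse intersections then split into four types: $\partial S_1 \cap \partial S_F$, $\partial S_1 \cap a_i$, $a \cap \partial S_F$, and $a \cap a_i$. The last type contributes zero after isotopy because the arcs involved are pairwise disjoint or coincident subarcs of the simple closed curve $\partial E$, while the two middle types are each bounded by $|\partial B \cap \partial E| = 2\mathcal{N}$, the minimal number of points where $\partial E$ can cross the boundary of an $F$-disk.

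The hard case is the first type, $\partial S_1 \cap \partial S_F$, because $B$ and the $B_i$'s are a priori distinct essential disks in $V$ whose boundaries may meet on $S$ in many points. To handle this, I would arrange the SE-position disk system from Lemma \ref{lemma3.1} so that $B$ itself can be taken to be one of the $B_i$'s, making $\partial S_1$ a subcurve of $\partial S_F$ and eliminating the unwanted contribution. If this cannot be done directly, I would instead modify $c^*$ along its spanning annulus---using the $F \times I$ structure on $V$ cut along $B$ to reroute the curve in $S$---to avoid all crossings with $\partial S_F$ not already accounted for by the $2\mathcal{N}$ points of $\partial E \cap \partial B$. Verifying that this modification preserves the spanning annulus cobordism to $c$ while yielding the sharp bound $|\beta^* \cap c^*| \leq 2\mathcal{N}$ will be the most delicate step, and should use the minimality $\mathcal{N} = \min |B \cap E|$ in a crucial way: any further reduction would produce an $F$-disk meeting $E$ in fewer points than $B$, contradicting the choice.
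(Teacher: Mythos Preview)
Your diagnosis of the difficulty---uncontrolled intersections between the $\partial S_F$-parallel (``$+$'') arcs of $\beta$ and $\pi_{S_1}(\partial E)$---is right, but both proposed fixes have gaps. Fix~(a) does not work: the disks $\{B_1,\ldots,B_s\}$ are produced by Lemma~\ref{lemma3.1} to put the disk $D_0\subset V(r)$ in SE-position, via iterated boundary compressions that you cannot steer, so there is no reason the particular $F$-disk $B$ realizing $\mathcal{N}=|B\cap E|$ can be arranged to appear among them; these two disk systems answer to independent constraints. Your intersection count also has a slip: type~3 ($a\cap\partial S_F$) concerns an arc of $\partial E$ meeting $\cup_i\partial B_i$, not $\partial B$, and $|\partial E\cap\partial B_i|$ is not controlled by~$\mathcal{N}$.

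Fix~(b) is in the right direction but misses that the paper modifies \emph{both} curves, in a coordinated way. The argument intersects the spanning annulus $\mathcal{A}$ (with $\partial\mathcal{A}=\pi_{S_1}(\partial E)\cup c$) with the $B_i$'s; each outermost arc of $\mathcal{A}\cap(\cup_i B_i)$ in $\mathcal{A}$ cuts off a bigon whose $S$-arc $\eta_{B_i}\subset S_F$ is necessarily not strongly essential (it cobounds a disk in $V$). One then (i)~surgers $\beta$ along $\eta_{B_i}$ to push it out of the planar region $S_{\eta_{B_i}}$, obtaining $\beta^1$ still cobounding a spanning annulus with $b$, and (ii)~boundary-compresses $\mathcal{A}$ along the matching outermost disk in $B_i$ and removes bigons, obtaining a smaller annulus $\mathcal{A}_1$. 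Iterating until $c^*=\mathcal{A}\cap S$ is disjoint from $\partial S_F$, only the $-$~arcs of the final $\beta^*$ can meet $c^*$, and the invariant that the $-$~arcs meet the current $c$-curve in at most $2\mathcal{N}$ points is preserved at each step (no new intersections are created because the surgeries happen inside $S_{\eta_{B_i}}$, which $\beta^*$ now avoids). Rerouting $c^*$ alone, as you suggest, gives no mechanism forcing the $+$~arcs of $\beta$ to miss the new $c^*$; the simultaneous surgery on $\beta$ is what makes the bound close.
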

\begin{proof}
 By the construction of $\beta$,  it is a union of  some arcs of $\partial S_{F}$ and some arcs in the interior of $S_{F}$. The former arc is marked by $+$ while the later one is marked by $-$. Then $+$ arcs and $-$ arcs appear in $\beta$ alternatively.
 
It is possible that there are some more points in $\beta\cap \pi_{S_{1}}(\partial E)$ than in $\beta\cap \partial E$. In this case,   all these new points  belong to the intersection points between $+$ arcs and $\partial E$.  Therefore for removing all these new points, it is necessary to make some surgeries  on both  $\beta$ and $\pi_{S_{1}}(\partial  E)$. 

Since  $\pi_{S_{1}}(\partial E)\cup c$ bounds a spanning annulus $\mathcal {A}$  in $V$,  by the standard innermost circle and outermost disk argument,  there is an outermost disk in $\mathcal {A}$ which is bounded by the union of an arc in $e_{i,1}\subset B_{i}$ and an arc $\eta_{B_{i}}\subset \partial E$.  Because this outermost disk is contained in $V$,  $\eta_{B_{i}}$ is not strongly essential in $S_{F}$. So $\eta_{B_{i}}$ cuts $S_{F}$ out a planar surface, denoted by $S_{\eta_{B_{i}}}$.

 If $\beta \cap \eta_{B_{i}}\neq \emptyset$, then for any point $p\in A$,  there is  a surgery on $\beta$ along $\eta_{B_{i}}$ so that $p$ is removed from $\beta\cap \pi_{S_{1}}(\partial E)$, denoted by $\beta^{1}$, see Figure  4.2 for example.
 \begin{figure}[!htbp]
\label{figure4.2}
\includegraphics[totalheight=4cm]{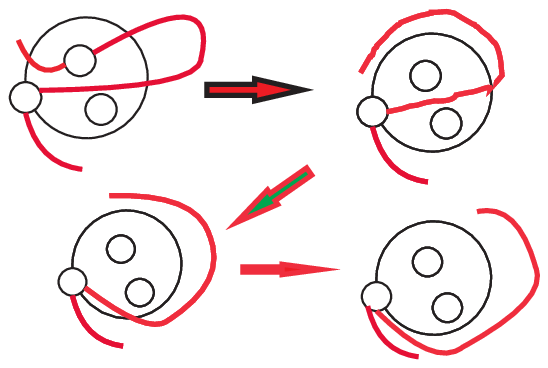}
\begin{center}
Figure 4.2. 
\end{center}
\end{figure}
 
 It is not hard to see that $\beta^{1}\cup b$ also bound a spanning annulus in $V$.  Moreover, there is no new point generated in this process.  If $\beta\cap \eta_{B_{i}}=\emptyset$,  then let $\beta^{1}=\beta$.  So  $\beta^{1}$ is  a union of some $+$ arcs and some $-$ arcs.  Then (1) $\beta^{1}\cup b$ bound a spanning annulus in $V$;  (2) $\beta^{1}$ is disjoint from $S_{\eta_{B_{i}}}$; (3) there are at most $2\mathcal {N}$ points  in intersection of   $-$ arcs between  $\beta^{1}$ and $\partial \mathcal {A}$. 

 Since $e_{i,1}$ cuts out  a disk in $B_{i}$,  there is an  outermost disk $B_{i,1}\subset B_{i}$ for the spanning  annulus $\mathcal {A}$. In this case, $\partial B_{i,1}$ consists of an arc in $B_{i}$ and an arc in  $S_{\eta_{B_{i}}}$.  Then doing a boundary compression along $B_{i,1}$ cuts $\mathcal {A}$ into a spanning annulus $\mathcal {A}_{1}$ and an essential disk in $V$.  In this process, since $S_{\eta_{B_{i}}}$ is disjoint from $\beta^{1}$, there is no new point generated in $\partial \mathcal{A}_{1}\cap \beta^{1}$. So $-$ arcs of $\beta^{1}$ intersects $\partial \mathcal{A}_{1}$ in at most $2\mathcal {N}$ points.
 
Let $c^{1}$ be $\mathcal {A}_{1}\cap S$.  If $c^{1}$ doesn't intersect $\partial S_{F}$ essentially,   then there is a bigon bounded by the union of $c^{1}$ and $\partial S_{F}$ in $S$. We assume that $\beta^{1}$ is disjoint from this bigon. For if there is a smaller bigon bounded by $\beta^{1}\cup c^{1}$, then we push $c^{1}$ over this smaller bigon so that it vanishes. In this process, there is no new point generated.  So we push $c^{1}$ over this bigon, bounded by $c^{1}\cup \partial S_{F}$,  so that it vanishes.  Also in this process,  there is no new  point  $\beta_{1}\cap c^{1}$ generated for $\beta^{1}\cup c^{1}$.  In all, ${-}$ arcs of $\beta^{1}$ intersects $c^{1}$ in at most $2\mathcal {N}$ points.

 After finitely many steps,  $c^{1}$ intersects $\cup_{i=1}^{s} \partial (B_{i})$ essentially and $c^{1}\cup c$ bounds a spanning annulus in $V$ too. Moreover $-$ arcs of $\beta^{1}$ intersects $c^{1}$ in at most $2\mathcal {N}$ points. 
 
 Cyclicly doing this operation until $c^{*}$ is disjoint from $\partial S_{F}$. Under this circumstance,   $-$ arcs of $\beta^{*}$ intersects $c^{*}$ in at most $2\mathcal {N}$ points. Since $c^{*}$ is disjoint from $+$ arcs of $\beta^{*}$,   $\beta^{*}$ intersects $c^{*}$ in at most $2\mathcal {N}$ points.  Moreover,  both $c^{*}\cup c$  and  $\beta^{*}\cup b$  bound  spanning annuli in $V$.
\end{proof}

By Lemma \ref{lemma4.3},  $\mid c^{*}\cap b^{*}\mid \leq 2\mathcal {N}$.  So $\mid c\cap b\mid\leq 2\mathcal{N}$. Therefore,  by Lemma \ref{lemma2.0}, $d_{\mathcal {C}(F)}(c, b)\leq 2\log_{2}2\mathcal{N}+1$.

\subsection{An upper bound of $d_{\mathcal {C}(F)}(b, \gamma_{l})$}

Since the Heegaard distance $d(V, W)$ is at least 3,  by Lemma {\ref{lemma2.1} and \ref{lemma2.2}},  for these two disks $E$ and $E_{\alpha_{l}}$ bounded by $\alpha_{l}$,  there is some connection between them. So do $\beta$ and $\gamma_{l}^{*}$.  Then there is  an upper bound of $d_{\mathcal {C}(F)}(b, \gamma_{l})$ obtained from it  as follows.   
\begin{lem}
\label{lemma4.4}
\begin{eqnarray*}
&&  g(S)-g(F)\geq 2,  d_{\mathcal {C}(F)}(b, \gamma_{l})<\\
&& {\frac{1}{2 \sqrt[]{6}}}[(\sqrt[]{6}-2)\frac{14\log_{2}2g(S)+7+(2-\sqrt[]{6})^{2g(S)-2g(F)-2}(50 \sqrt[]{6}-14\log_{2}{2g(S)}-117)}{\sqrt[]{6}-1}+\\
    && +  (\sqrt[]{6}+2)\frac{-14\log_{2}2g(S)-7+(2+\sqrt[]{6})^{2g(S)-2g(F)-2}(50 \sqrt[]{6}+14\log_{2}{2g(S)}+117)}{\sqrt[]{6}+1}];\\
  && g(S)-g(F)=1, d_{\mathcal {C}(F)}(b, \gamma_{l})\leq 52.
    \end{eqnarray*}

\end{lem}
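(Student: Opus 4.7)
The starting observation is that each component of $\partial S_{F}$ bounds an $F$-disk in $V$, so by $d(V,W)\geq 3$ none of these curves can be disjoint from a disk boundary in $W$. Hence $\partial S_{F}$ is disk-busting in $W$, which lets me apply Lemma \ref{lemma2.1} with the roles of $V$ and $W$ interchanged to the subsurface $S_{F}\subset S=\partial_{+}W$. Excluding (or directly analysing) the exceptional $I$-bundle case, I conclude that $\pi_{S_{F}}(\mathcal{D}(W))$ has diameter at most $12$ in $\mathcal{C}(S_{F})$, and in particular $d_{\mathcal{C}(S_{F})}(\pi_{S_{F}}(\partial E),\pi_{S_{F}}(\alpha_{l}))\leq 12$, since both $\partial E$ and $\alpha_{l}$ bound essential disks in $W$.

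Next I pass from these projections to the strongly essential curves $\beta$ and $\gamma_{l}^{*}$ produced in Section 4.1 by the arc-selection procedure of Lemma \ref{lemma4.2}. Because that procedure uses at most $t\leq 2[g(S)-g(F)]$ arcs, I can estimate $|\beta\cap\pi_{S_{F}}(\partial E)|$ and $|\gamma_{l}^{*}\cap\pi_{S_{F}}(\alpha_{l})|$ in terms of $g(S)$ and $g(S)-g(F)$. Combined with Lemma \ref{lemma2.0} and the diameter bound above, this produces a bound of the shape $12+C\log_{2}g(S)+C'$ on $d_{\mathcal{C}(S_{F})}(\beta,\gamma_{l}^{*})$, the logarithmic term tracking the $14\log_{2}2g(S)+7$ summand that later appears in the final expression.

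The hard part is the translation from $\mathcal{C}(S_{F})$ to $\mathcal{C}(F)$. The spanning annuli pair $\beta$ with $b$ and $\gamma_{l}^{*}$ with $\gamma_{l}$, but $S_{F}$ carries $2[g(S)-g(F)]$ boundary circles while $F$ is closed, so $|b\cap\gamma_{l}|$ is not directly bounded by $|\beta\cap\gamma_{l}^{*}|$. My plan is to close up one boundary pair $\partial B_{i}^{\pm}$ of $S_{F}$ at a time --- effectively reversing the $F$-disk cut $B_{i}$ --- and track how the intersection bound $a_{n}$ between the partially closed representatives of $b$ and $\gamma_{l}$ evolves. I expect a linear recurrence of the form $a_{n}\leq 4a_{n-1}+2a_{n-2}$, where the factor $4$ comes from the two copies of $\partial B_{i}$ combined with the two reconnection patterns per arc, and the $+2a_{n-2}$ captures what the previous closure has already forced. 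Its characteristic polynomial $x^{2}-4x-2=0$ has roots $2\pm\sqrt{6}$, and after $2[g(S)-g(F)]-2$ such steps the closed-form solution produces exactly the exponential factors $(2\pm\sqrt{6})^{2g(S)-2g(F)-2}$ in the statement, with the additive constants $50\sqrt{6}$ and $117$ arising from the particular solution seeded by the bound of the previous paragraph.

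Finally, when $g(S)-g(F)=1$ the recurrence has not yet had a chance to start: $S_{F}$ has only two boundary components, and a single application of Lemma \ref{lemma2.1} combined with Lemma \ref{lemma2.0} suffices to produce the uniform bound $52$ without any iteration. The main obstacle will be verifying the precise amplification constants $4$ and $2$ in the recurrence, together with pinning down the initial data $a_{0},a_{1}$ that feed into the closed-form solution; once these are in place the rest reduces to bookkeeping through standard linear-recurrence techniques.
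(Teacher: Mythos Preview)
Your proposal correctly identifies the target recurrence $x^{2}-4x-2=0$ and the origin of the inhomogeneous term $14\log_{2}2g(S)+7$, but the mechanism you describe for producing it is not the one that works, and I do not see how to make your version go through. Two concrete issues:

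First, the recurrence in the paper is not on intersection numbers between ``partially closed representatives'' but on distance bounds $f(n)$ in $\mathcal{C}(F)$, indexed by the number $n$ of boundary components of the subsurface under consideration. The inductive step does not \emph{close up} boundary circles of $S_{F}$; it \emph{cuts} $S_{F}$ further along specific arcs $e,e_{1,1},e_{1,2},e_{l}$ to obtain six auxiliary subsurfaces $S_{F,e},\,S_{F,e_{1,1}},\,S_{F,e_{1,2}},\,S_{F,e_{l}}$ (each with at most $n-1$ boundary components) and $S_{F,e,e_{1,1}},\,S_{F,e_{1,2},e_{l}}$ (each with at most $n-2$). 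The $4$ and $2$ in $f(n)=4f(n-1)+2f(n-2)+\cdots$ simply count these subsurfaces; the seven additive terms $2\log_{2}2g(S)+1$ come from seven transitions between strongly essential curves in adjacent subsurfaces, each of which is controlled by an intersection bound of $2g(S)$ via Lemma~\ref{lemma4.2}. Your ``two copies of $\partial B_{i}$ combined with two reconnection patterns'' explanation for the coefficient $4$ has no analogue in the actual argument.

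Second, and more fundamentally, your plan omits Lemma~\ref{lemma2.2} entirely. Lemma~\ref{lemma2.1} alone only bounds the diameter of $\pi_{S_{F}}(\mathcal{D}(W))$ in $\mathcal{C}(S_{F})$, but a geodesic in $\mathcal{C}(S_{F})$ between $\beta$ and $\gamma_{l}^{*}$ will typically pass through curves that are \emph{not} strongly essential in $S_{F}$ and hence have no image in $\mathcal{C}(F)$; there is no direct ``translation'' map $\mathcal{C}(S_{F})\to\mathcal{C}(F)$ to push the bound through. What rescues the argument is Lemma~\ref{lemma2.2}: it supplies an intermediary essential disk $E_{1}\subset W$ (one realising the minimum $\mid\partial E_{1}\cap\partial S_{F}\mid$) together with arcs $e_{1,1},e_{1,2}\subset\partial E_{1}\cap S_{F}$ that are disjoint from arcs $e\subset\partial E$ and $e_{l}\subset\partial E_{l}$ respectively. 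Cutting along these arcs is what reduces the number of boundary components and sets up the induction. The base cases $f(1)=12$ and $f(2)=52$ are themselves established by a detailed case analysis (Claims~\ref{claim4.0} and~\ref{claim4.1}) of whether these four arcs are strongly essential, again relying on Lemma~\ref{lemma2.2} rather than on Lemma~\ref{lemma2.1} alone.
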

\begin{proof}
There is a  mathematical induction  in this proof. 

{\bf (I)}  $S_{F}$ has only one boundary component. 

Since $d(V,W)\geq 3$,  $W$  is neither  a product I-bundle of $S_{F}$ nor a twisted I-bundle of $S_{F}$. 
By Lemma \ref{lemma2.1}, $d_{\mathcal {C} (S_{F})}(\beta, \gamma_{l}^{*})\leq 12$.  It is assumed that $S_{F}$ has only one boundary componnet. Then every essential  simple closed curve in $S_{F}$ is also strongly essential in it. So  $d_{\mathcal {C}(F)}(b, \gamma_{l})\leq 12$. 

{\bf (II)}  $S_{F}$ has exactly two boundary components.

 Since $d(V,W)\geq 3$,   $N> 4$.  For if not, then there is an essential disk $E_{0}\subset W$ so that $\partial E_{0}\cap \partial S_{F}$ has 2 or 4 points. If $\partial E_{0}\cap \partial S_{F}$ has 2 points, then $d(V,W)\leq 1$.  If $\partial E_{0}\cap  \partial S_{F}$ has  4 points, either all these 4 points lies in a same component of $\partial S_{F}$ or  there are 2 intersecting points  for each boundary component of $S_{F}$ individually.  But in both of these two cases, $d(V,W)\leq 2$. 
 
By Lemma \ref{lemma2.2},  there is an essential disk $E_{1}\subset W$ so that (1) it intersects $\partial S_{F}$ minimally;  (2)  a component $e_{1,1}$  (resp. $e_{1,2}$ ) of $\partial E_{1}\cap S_{F}$ is disjoint  from a component $e$ (resp. $e_{l}$) of $\partial E\cap S_{F}$ (resp. $\partial E_{l}\cap S_{F}$).
 
Let's firstly consider the case that all of these four arcs are strongly essential in $S_{F}$.  Then each one of  $\{\pi_{S_{F}}(e), \pi_{S_{F}}(e_{1,1}), \pi_{S_{F}}(e_{1,2}), \pi_{S_{F}}(e_{l})\}$ is strongly essential in $S_{F}$. For $\pi_{S_{F}}(e)$, there is an essential simple closed curve in $F$  so that the union of them bounds a spanning annulus in $V$.  In order to not introduce too many labels, this essential simple closed curve  in $F$  for $\pi_{S_{F}}(e)$ is still denoted by itself. So do the left three curves.  

It is not hard to see that 

\begin{eqnarray*}
\mid \pi_{S_{F}}(e)\cap \pi_{S_{F}}(e_{1,1}) \mid \leq 1;\\
\mid \pi_{S_{F}}(e_{1,1})\cap  \pi_{S_{F}}(e_{1,2}) \mid\leq 1;\\
\mid \pi_{S_{F}}(e_{1,2})\cap  \pi_{S_{F}}(e_{l}) \mid \leq 1.
\end{eqnarray*}
Then
\begin{eqnarray*}
d_{\mathcal {C} (F)}(\pi_{S_{F}}(e), \pi_{S_{F}}(e_{1,1}))\leq 2;\\
d_{\mathcal {C} (F)}(\pi_{S_{F}}(e_{1,1}), \pi_{S_{F}}(e_{1,2}))\leq 2;\\
d_{\mathcal {C} (F)}(\pi_{S_{F}}(e_{1,2}), \pi_{S_{F}}(e_{l}))\leq 2. 
\end{eqnarray*}
So 
\[d_{\mathcal {C} (F)}(\pi_{S_{F}}(e), \pi_{S_{F}}(e_{l}))\leq 6.\]

Since $\beta$ is a union of at most two components of $\partial E\cap S_{F}$ and some arcs of $\partial S_{F}$,  $\beta$ intersects $\pi_{S_{F}}(e)$ in at most one point. 
It is known that the union of $\beta$ and $b$ bounds a spanning annulus in $V$. So  $b$ intersects $\pi_{S_{F}}(e)$ in at most one point in $F$ up to isotopy. Then $d_{\mathcal {C}(F)}(b, \pi_{S_{F}}(e))\leq 2$.  Similarly, $d_{\mathcal {C}(F)}(\pi_{S_{F}}(e_{l}),\gamma_{l})\leq 2$.  Hence  $d_{\mathcal {C}(F)}(b, \gamma_{l})\leq 10$. 

The worst scenario is that none of $\{\partial E_{1}\cap S_{F}, \partial E_{1}\cap S_{F} , \partial E_{l}\cap S_{F}\}$ is strongly essential in $S_{F}$  while none of any  two arcs of$\{e, e_{1,1}, e_{1,2}, e_{l}\}$ is isotopic.   Under this circumstance, 
each component of $\{\partial E_{1}\cap S_{F}, \partial E_{1}\cap S_{F} , \partial E_{l}\cap S_{F}\}$ has its two ends in different boundary components of $S_{F}$.  For if not, let's consider $\partial E\cap S_{F}$ for example. Then there is one arc of $\partial E\cap S_{F}$ so that it cuts out a planar surface of $S_{F}$ and  a subsurface $S_{F, E}$, where $\partial S_{F,E}$ is connected.  It is known that $\partial E\cap \partial S_{F,E}$ is not an empty set.  Then for each component of $\partial E\cap S_{F,E}$,  it is a sub-arc of some component of $\partial E\cap S_{F,E}$, see Figure 4.3. It means that there is a strongly essential  arc of $\partial E\cap S_{F}$ in $S_{F}$.

\begin{figure}[!htbp]
\label{figure4.3}
\includegraphics[totalheight=2.5cm]{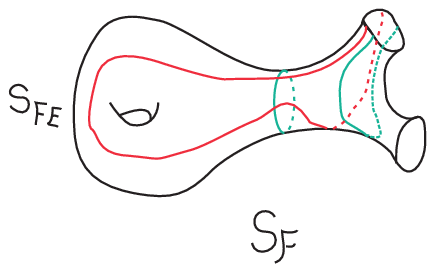}
\begin{center}
Figure 4.3.
\end{center}
\end{figure}

\begin{clm}
\label{claim4.0}
In the worst scenario, $d_{\mathcal  {C}(F)}(b, \gamma_{l})\leq 52$.
\end{clm}
\begin{proof}

Let  $S_{F,e}$ be the subsurface obtained from cutting $S_{F}$ along $e$. Then $\partial S_{F,e}$ is connected.  Since $\beta$ is the union of two components of $\partial E\cap S_{F}$ and two sub-arcs of $\partial S_{F}$,  $\beta$ intersects  $\pi_{S_{F,e}}(\partial E)$  in at most 2 points, see Figure 4.4.  By Lemma \ref{lemma4.3}, $d_{\mathcal {C}(F)}(b, \pi_{S_{F,e}}(\partial E))\leq 2$.

\begin{figure}[!htbp]
\label{figure4.4}
\includegraphics[totalheight=3cm]{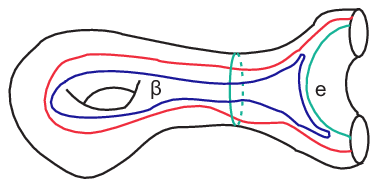}
\begin{center}
Figure 4.4.
\end{center}
\end{figure}

   Since $e_{1,1}$ is not isotopic to $e$,  the union of  $e_{1,1}$, $e$ and two boundary sub-arcs is a strongly essential simple closed curve in $S_{F}$, see Figure 4.4 for example.  Moreover, it is isotopic to $\pi_{S_{F,e}}(\partial E_{1})$. By Lemma \ref{lemma2.1},  since $W$ is not an I-bundle of $S_{F,e}$,  \[d_{\mathcal {C}(S_{F,e})}(\pi_{S_{F,e}}(\partial E), \pi_{S_{F,e}}(\partial E_{1}))\leq 12.\]  Since $\partial S_{F,e}$  is connected, for any essential simple closed curve  in $S_{F,e}$,  there is an essential simple closed curve in $F$ so that the union of them bound a spanning annulus in $V$.  In order to not introduce too many symbols, from now on, if there is no further notation,  for any strongly essential simple closed curve $C\subset S_{F}$, the corresponding essential simple closed curve  in $F$ is also represented by itself . So $d_{\mathcal {C}(F)}(b, \pi_{S_{F,e}}(\partial E_{1}))\leq 14$. 

Cutting $S_{F}$ along $e_{1,1}$ produces a subsurface $S_{F, e_{1,1}}$,  where  $\partial S_{F,e_{1,1}}$ is connected. Then by Lemma \ref{lemma2.1}, since $W$ is not an I-bundle of $S_{F, e_{1,1}}$,  $d_{\mathcal {C}(S_{F,e_{1,1}})}(\pi_{S_{F,e_{1,1}}}(\partial E), \pi_{S_{F,e_{1,1}}}(\partial E_{1}))\leq 12$.  Since  $\partial S_{F,e_{1,1}}$  is connected,  for any essential simple closed curve  in $S_{F,e_{1,1}}$,  there is an essential simple closed curve in $F$ so that the union of them bound a spanning annulus in $V$. So $d_{\mathcal {C}(F)}(\pi_{S_{F,e_{1,1}}}(\partial E_{1}), \pi_{S_{F,e_{1,1}}}(\partial E))\leq 12$.  

It is not hard to see that the union of $e$, $e_{1,1}$ and two boundary sub-arcs is  also isotopic to  $\pi_{S_{F,e_{1,1}}}(\partial E)$. Then $\pi_{S_{F,e_{1,1}}}(\partial E)$ is isotopic to $\pi_{S_{F,e}}(\partial E_{1})$. By the triangle inequality,  \[d_{\mathcal {C}(F)}(b, \pi_{S_{F,e_{1,1}}}(\partial E_{1}))\leq 26.\]  Similarly,   \[d_{\mathcal {C}(F)}(\gamma_{l}, \pi_{S_{F,e_{1,2}}}(\partial E_{1}))\leq 26.\]  Moreover, the union of $e_{1,1}$, $e_{1,2}$ and two boundary arcs is isotopic to not only   $\pi_{S_{F,e_{1,1}}}(\partial E_{1})$ but also $\pi_{S_{F,e_{1,2}}}(\partial E_{1})$.  Then by the triangle inequality again,
\[d_{\mathcal {C}(F)}(b, \gamma_{l})\leq 52.\]

\end{proof}

In  general, let's firstly consider the case that  $e_{1,1}$ is strongly essential in $S_{F}$.

\begin{clm}
\label{claim4.1} 
If $e_{1,1}$ is strongly essential in $S_{F}$,  then
$d_{\mathcal {C}(F)}(b, \gamma_{l})\leq 40.$
\end{clm} 
\begin{proof}
If $e$ is also strongly essential in $S_{F}$,  then $d_{\mathcal {C}(F)}(\pi_{S_{F}}(e), \pi_{S_{F}}(e_{1,1}))\leq 2$. Since there are at most two points in the intersection between $\pi_{S_{F}}(e)$ and $\beta$,  $b$ intersects $\pi_{S_{F}}(e)$ in at most two points in $F$. So $d_{\mathcal {C}(F)}(b, \pi_{S_{F}}(e_{1,1}))\leq 4$.

 If $e$ is not strongly essential in $S_{F}$,  then there is an essential subsurface $S_{F, e}$ of $S_{F}$ obtained from cutting $S_{F}$ along $e$. By Lemma \ref{lemma2.1}, since $W$ is not an I-bundle of $S_{F,e}$, \[d_{\mathcal {C}(S_{F,e})}(\pi_{S_{F,e}}(\partial E), \pi_{S_{F,e}}(\partial E_{1}))\leq 12.\]
  On one hand, $e_{1,1}\cap S_{F,e}$ is an essential arc of $\partial E_{1}\cap S_{F,e}$.  Then $\pi_{S_{F,e}}(e_{1,1})$ is isotopic to $\pi_{S_{F,e}}(\partial E_{1})$. Since $e_{1,1}$ is strongly essential,  $\partial e_{1,1}$ lies in a same boundary.  Therefore $\pi_{S_{F}}(e_{1,1})$ is isotopic to both  $\pi_{S_{F,e}}(e_{1,1})$ and  $\pi_{S_{F,e}}(\partial E_{1})$.  On the other hand, $\beta$ intersects $\pi_{S_{F,e}}(\partial E)$ in at most two points. So  $d_{\mathcal {C}(F)}(b, \pi_{S_{F,e}}(\partial E))\leq 2$. 
 By the triangle inequality,  $d_{\mathcal {C}(F)}(b, \pi_{S_{F,e}}(\partial E_{1}))\leq 14$. In all,
 \begin{equation}
 \label{equation4.6}
d_{\mathcal {C}(F)}(b, \pi_{S_{F}}(e_{1,1}))\leq 14.  
\end{equation}

{\bf Case 4.6.1.}  $e_{1,2}$ is strongly essential in $S_{F}$. 

Then by the above argument, $d_{\mathcal {C}(F)}(\gamma_{l}, \pi_{S_{F}}(e_{1,2}))\leq 14$. Since $e_{1,1}$ is disjoint from $e_{1,2}$,  $d_{\mathcal {C}(F)}(\pi_{S_{F}}(e_{1,1}), \pi_{S_{F}}(e_{1,2}))\leq 2$.  Then $d_{\mathcal {C}(F)}(b, \gamma_{l})\leq 30$.

{\bf Case 4.6.2.} $e_{1,2}$ is not strongly essential in $S_{F}$. 

 Then there is an essential subsurface $S_{F,e_{1,2}}\subset S_{F}$ obtained from cutting $S_{F}$ along $e_{1,2}$ so that $\pi_{S_{F}}(e_{1,1})$ is isotopic to $\pi_{S_{F,e_{1,2}}}(\partial E_{1})$. 

 {\bf Subcase 4.6.2.1.} $e_{l}$ is strongly essential in $S_{F}$.
 
Since $e_{1,2}$ is disjoint from $e_{l}$,  $\pi_{S_{F}}(e_{l})$ is isotopic to $\pi_{S_{F,e_{1,2}}}(\partial E_{l})$.  Since $W$ is not a I-bundle of $S_{F,e_{1,2}}$,  by Lemma \ref{lemma2.1}, \[d_{\mathcal {C}(S_{F,e_{1,2}})}(\pi_{S_{F,e_{1,2}}}(\partial E_{l}), \pi_{S_{F,e_{1,2}}}(\partial E_{1}))\leq 12.\]  So \[d_{\mathcal {C}(S_{F,e_{1,2}})}(\pi_{S_{F}}(e_{1,1}), \pi_{S_{F}}(e_{l}))\leq 12 .\] It is not hard to see that every essential simple closed curve in $S_{F, e_{1,2}}$ is strongly essential in $S_{F}$.  Therefore \[d_{\mathcal {C}(F)}(\pi_{S_{F}}(e_{1,1}), \pi_{S_{F}}(e_{l}))\leq 12.\] Since $\gamma_{l}^{*}$ intersects $\pi_{S_{F}}(e_{l})$ in at most one point,  $d_{\mathcal{C}(F)}(\pi_{S_{F}}(e_{l}), \gamma_{l})\leq 2$. By the triangle inequality, \[d_{\mathcal {C}(F)}(\pi_{S_{F}}(e_{1,1}), \gamma_{l})\leq 14. \] So \[d_{\mathcal {C}(F)}(b, \gamma_{l})\leq d_{\mathcal {C}(F)}(b, \pi_{S_{F}}(e_{1,1}))+d_{\mathcal {C}(F)}(\pi_{S_{F}}(e_{1,1}), \gamma_{l})\leq 28.\]

{\bf Subcase 4.6.2.2.}  $e_{l}$ is not strongly essential in $S_{F}$.

Then there is an essential subsurface $S_{F, e_{l}}$ obtained from cutting $S_{F}$ along $e_{l}$. Since $e_{1,2}$ is not strongly essential, 
either $e_{l}$ is isotopic to $e_{1,2}$ or not. In the first case, since $W$ is not an I-bundle of $S_{F,e_{l}}$, by Lemma \ref{lemma2.1}, $d_{\mathcal {C}(S_{F,e_{l}})}(\pi_{S_{F,e_{l}}}(\partial E_{1}), \pi_{S_{F,e_{l}}}(\partial E_{l}))\leq 12$. So $d_{\mathcal {C}(F)}(\pi_{S_{F,e_{l}}}(\partial E_{1}), \pi_{S_{F,e_{l}}}(\partial E_{l}))\leq 12$. By the construction of $\gamma_{l}$, $\gamma_{l}$ intersects $\pi_{S_{F,e_{l}}}(\partial E_{l})$ in at most two points. Then $d_{\mathcal {C}(F)}(\gamma_{l}, \pi_{S_{F,e_{l}}}(\partial E_{l}))\leq 2$.  So
\[d_{\mathcal {C}(F)}(\pi_{S_{F,e_{l}}}(\partial E_{1}), \gamma_{l})\leq 14.\]
Since $\pi_{S_{F}}(e_{1,1})$ is isotopic to $\pi_{S_{F,e_{1,2}}}(\partial E_{1})$,   $\pi_{S_{F}}(e_{1,1})$ is isotopic to $\pi_{S_{F,e_{l}}}(\partial E_{1})$.
Then \[d_{\mathcal {C}(F)}(\pi_{S_{F}}(e_{1,1}), \gamma_{l})\leq 14.\]
By Equation \ref{equation4.6}, 
\[d_{\mathcal {C}(F)}(b, \gamma_{l})\leq  d_{\mathcal {C}(F)}(b, \pi_{S_{F}}(e_{1,1}))+d_{\mathcal {C}(F)}(\pi_{S_{F}}(e_{1,1}), \gamma_{l})\leq 28.\]

The left case is that $e_{l}$ is not isotopic to $e_{1,2}$. Then one of them has its ends in two different boundary curves of $S_{F}$, says $e_{1,2}$.  For if not, then at least one of $\{e_{1,2}, e_{l}\}$ is strongly essential. So $\pi_{S_{F}}(e_{1,1})$ is isotopic to $\pi_{S_{F, e_{1,2}}}(\partial E_{1})$. If $e_{l}$  also has its two ends in two different boundary curves of $S_{F}$, then by the argument of  Claim \ref{claim4.0},  \[d_{\mathcal {C}(F)}(\gamma_{l}, \pi_{S_{F,e_{1,2}}}(\partial E_{1}))\leq 26.\]  Then \[d_{\mathcal {C}(F)}(\pi_{S_{F}}(e_{1,1}), \gamma_{l})\leq 26.\]
Since $d_{\mathcal {C}(F)}(b, \pi_{S_{F}}(e_{1,1}))\leq 14$, \[d_{\mathcal {C}(F)}(b, \gamma_{l})\leq 40.\] 
If $e_{l}$ has its two ends in a same boundary component of $S_{F}$, then it cuts out a subsurface $S_{F,e_{l}}$ containing no $e_{1,1}$, see Figure 4.5.

\begin{figure}[!htbp]
\label{figure4.5}
\includegraphics[totalheight=3cm]{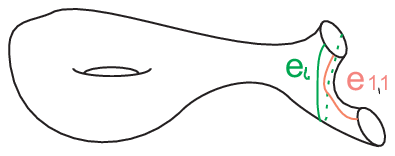}
\begin{center}
Figure 4.5.
\end{center}
\end{figure}

On one hand, $\pi_{S_{F}}(e_{1,1})$ (resp. $\gamma_{l}$)  is isotopic to  $\pi_{S_{F,e_{l}}}(\partial E_{1})$ (resp. $\pi_{S_{F,e_{l}}}(\partial E_{l})$). On the other hand, since $W$ is not an I-bundle of $S_{F,e_{l}}$, by Lemma \ref{lemma2.1}, \[d_{\mathcal {C}(S_{F,e_{l}})}(\pi_{S_{F,e_{l}}}(\partial E_{1}), \pi_{S_{F,e_{l}}}(\partial E_{l}))\leq 12.\]
So \[d_{\mathcal {C}(F)}(\pi_{S_{F}}(e_{1,1}), \gamma_{l})\leq 12.\] 

Since $d_{\mathcal {C}(F)}(b, \pi_{S_{F}}(e_{1,1}))\leq 14$,  
\[d_{\mathcal {C}(F)}(b, \gamma_{l})\leq 26.\]

Similarly,  for the case that $e_{l}$ has its ends in two different boundary components in $S_{F}$,  \[d_{\mathcal {C}(F)}(b, \gamma_{l})\leq 26.\]
\end{proof}

Similarly, if $e_{1,2}$ is strongly essential in $S_{F}$, then $d_{\mathcal {C}(F)}(b, \gamma_{l})\leq 40$. The left case is that  neither $e_{1,1}$ nor $e_{1,2}$ is strongly essential.  If $e_{1,1}$ is not isotopic to $e_{1,2}$, then either the union of $e_{1,1}$, $e_{1,2}$  and two sub-arcs of $\partial S_{F}$  is a closed strongly essential curve in $S_{F}$ or one of them, says $e_{1,1}$ for example, cuts out an essential subsurface $S_{F,e_{1,1}}$ containing no $e_{1,2}$. For the first case,  the closed strongly essential curve is isotopic to both $\pi_{S_{F,e_{1,2}}}(\partial E_{1})$ and $\pi_{S_{F,e_{1,1}}}(\partial E_{1})$. By the proof of Claim \ref{claim4.1}, \[d_{\mathcal {C}(F)}(\pi_{S_{F, e_{1,2}}}(\partial E_{1}), \gamma_{l})\leq 26.\]  Similarly, \[d_{\mathcal {C}(F)}(\pi_{S_{F, e_{1,1}}}(\partial E_{1}), b)\leq 26.\]
So \[d_{\mathcal {C}(F)}(b, \gamma_{l})\leq 52.\]

For the later case, without loss of generality, we assume that there is an essential subsurface $S_{F,e_{1,1}}\subset S_{F}$ of $e_{1,1}$  so that it  doesn't contain $e_{1,2}$.  Let $S_{F,e_{1,2}}$ be the surface obtained from cutting $S_{F}$ along $e_{1,2}$.

\begin{figure}[!htbp]
\label{figure4.6}
\includegraphics[totalheight=3cm]{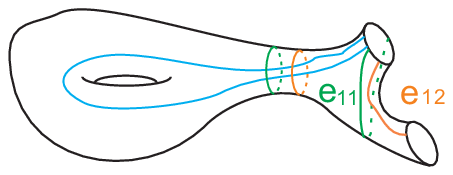}
\begin{center}
Figure 4.6.
\end{center}
\end{figure}

  Then it is not hard to see that $\pi_{S_{F,e_{1,2}}}(\partial E_{1})$ is isotopic to $\pi_{S_{F,e_{1,1}}}(\partial E_{1})$, see Figure 4.6. By the proof of Claim \ref{claim4.1}, \[d_{\mathcal {C}(F)}(\pi_{S_{F, e_{1,2}}}(\partial E_{1}), \gamma_{l})\leq 26.\]  Similarly, \[d_{\mathcal {C}(F)}(\pi_{S_{F, e_{1,1}}}(\partial E_{1}), b)\leq 26.\]
 So \[d_{\mathcal {C}(F)}(b, \gamma_{l})\leq 52.\]
 
If $e_{1,1}$ is isotopic to $e_{1,2}$,  by the similar argument, 
\begin{eqnarray*}
d_{\mathcal {C}(F)}(b, \pi_{S_{F,e_{1,1}}}(\partial E))&\leq &14;\\
d_{\mathcal {C}(F)}(\pi_{S_{F,e_{1,1}}}(\partial E), \pi_{S_{F,e_{1,2}}}(\partial E_{l}))&\leq& 12;\\
d_{\mathcal {C}(F)}(\pi_{S_{F,e_{1,2}}}(\partial E_{l}), \gamma_{l})&\leq &14.
\end{eqnarray*}
Therefore, \[d_{\mathcal {C}(F)}(b, \gamma_{l})\leq 40.\]

In all,  \[d_{\mathcal {C}(F)}(b, \gamma_{l})\leq 52.\]

{\bf (III)}  $ S_{F}$ has $n\geq 3$ boundary components. 

Since $d(V,W)\geq 3$,  by a similar argument in case 2, $N> 4$. By Lemma \ref{lemma2.2},  there is an essential disk $E_{1}\subset W$ so that (1) it intersects $\partial S_{F}$ minimally, (2) a  component $e_{1,1}$ of $\partial E_{1}\cap S_{F}$ is disjoint  from a component $e$ of $\partial E\cap S_{F}$. Similarly, there is a component $e_{1,2}$ of $\partial E_{1}\cap S_{F}$ disjoint from a component $e_{l}$ of $\partial E_{l}\cap S_{F}$. 

The most complicated case is that none of $\{e, e_{1,1}, e_{1,2}, e_{l}\}$  is strongly essential in $S_{F}$ while none of any two curves is isotopic. If $e$ has its two ends in a same boundary  component of $S_{F}$,  then cutting $S_{F}$ along it produces a nonplanar  subsurface $S_{F,e}$. If $e$ has its two ends in two different boundary components of $S_{F}$, then cutting $S_{F}$ along it produces a subsurface $S_{F,e}$.  

 Since $d(V,W)\geq 3$, by Lemma \ref{lemma4.2},  there is a strongly essential simple closed curve  $\beta_{1,e}$  in $S_{F,e}$ for  $\partial E$. Since $\beta$ is either disjoint from $e$ or intersects $e$ in at most  two points,  $\beta $ intersects $\beta_{1,e}$ in at most $2g(S)$ points.  For the essential disk $E_{1}$,  by Lemma \ref{lemma4.2} again,  there is a strongly essential simple closed curve $\theta$ (resp. $\theta_{1,e} $)for $E_{1}$ in $S_{F}$ ( resp. $S_{F,e}$).

 Let  $S_{F,e_{1,1}}$ be the nonplanar subsurface obtained from cutting $S_{F}$ along $e_{1,1}$. By Lemma \ref{lemma4.2},  there is a strongly essential simple closed curve $\beta_{1, e_{1,1}}$ (resp. $\theta_{1,e_{1,1}}$) for $\partial E$ (resp. $\partial E_{1}$) in $S_{F,e_{1,1}}$.

Let $S_{F, e,e_{1,1}}$ be the nonplanar subsurface obtained from cutting $S_{F}$ along $e $ and $e_{1,1}$. By Lemma \ref{lemma4.2},  there is a strongly essential simple closed curve $\beta_{1,e, e_{1,1}}$  (resp. $\theta_{1,e, e_{1,1}}$)  for  $\partial E$ (resp. $\partial E_{1}$) in $S_{F,e,e_{1,1}}$.
 It is not hard  to see that $\beta_{1,e_{1,1}}$  (resp. $\theta_{1,e}$ ) intersects $\beta_{1,e, e_{1,1}}$  (resp. $\theta_{1,e, e_{1,1}}$) in at most $2g(S)$ points. 

Let $S_{F, e_{1,2}}$ be  the nonplanar subsurface obtained from cuttting $S_{F}$ along $e_{1,2}$. By Lemma \ref{lemma4.2},  there is a strongly essential simple closed curve $\theta_{1, e_{1,2}}$ for $\partial E_{1}$.  Since $e_{1,1}$ is disjoint from $e_{1,2}$,  $\theta_{1,e_{1,1}}$ intersects $\theta_{1, e_{1,2}}$ in at most $2g(S)$ points.  By the similar argument, there is a strongly essential simple closed curve $\gamma_{l, e_{1,2}}$ for $\partial E_{l}$ in $S_{F, e_{1,2}}$.  

Let $S_{F,e_{l}}$ be the nonplanar subsurface obtained from cutting $S_{F}$  along $e_{l}$.  By Lemma \ref{lemma4.2},  there is a strongly essential simple closed curve $\theta_{1,e_{l}}$  (resp. $\gamma_{l,e_{l}}$)  for  $\partial E_{1}$ (resp. $\partial E_{l}$) in $S_{F,e_{l}}$.  It is not hard to see that $\gamma_{l, e_{l}}$ intersects $\gamma_{l}$ in at most $2g(S)$ points. 

Let $S_{F, e_{1,2}, e_{l}}$ be the nonplanar subsurface obtained from cutting $S_{F}$ along the union of $e_{1,2}$ and $e_{l}$.  By Lemma \ref{lemma4.2},  there is a strongly essential simple closed curve $\theta_{1,e_{1,2}, e_{l}}$  (resp. $\gamma_{l, e_{1,2}, e_{l}}$)  for  $\partial E_{1}$ (resp. $\partial E_{l}$) in $S_{F, e_{1,2}, e_{l}}$.   Since $e_{1,2}$ is disjoint from $e_{l}$,  
$\theta_{1,e_{l}}$ (resp. $\gamma_{1, e_{1,2}}$ ) intersects $\theta_{1,e_{1,2}, e_{l}}$ (resp. $\gamma_{l, e_{1,2}, e_{l}}$)  in at most $2g(S)$ points.  

Therefore, \begin{eqnarray*}
d_{\mathcal {C}(F)}(b, \gamma_{l}) &\leq& d_{\mathcal {C}(F)}(b, \beta_{1,e})+d_{\mathcal {C}(F)}(\beta_{1,e}, \theta_{1,e})+  d_{\mathcal{C}(F)}(\theta_{1,e}, \theta_{1,e,e_{1,1}})\\ &+&d_{\mathcal {C} (F)}(\theta_{1,e, e_{1,1}}, \beta_{1,e,e_{1,1}})+ d_{\mathcal {C}(F)}(\beta_{1,e,e_{1,1}}, \beta_{1,e_{1,1}})+d_{\mathcal {C}(F)}(\beta_{1,e_{1,1}}, \theta_{1,e_{1,1}})\\
&+& d_{\mathcal {C}(F)}(\theta_{1,e_{1,1}}, \theta_{1,e_{1,2}})+d_{\mathcal {C}(F)}(\theta_{1,e_{1,2}}, \gamma_{l, e_{1,2}})+d_{\mathcal {C}(F)}(\gamma_{l,e_{1,2}}, \gamma_{l,e_{1,2},e_{l}})\\ &+&  d_{\mathcal{C}(F)}(\gamma_{l,e_{1,2},e_{l}}, \theta_{1,e_{1,2},e_{l}})+d_{\mathcal {C} (F)}(\theta_{1,e_{1,2}, e_{l}}, \theta_{1,e_{l}})+ d_{\mathcal {C}(F)}(\theta_{1,e_{l}}, \gamma_{l, e_{l}})\\&+&d_{\mathcal {C}(F)}(\gamma_{l,e_{l}}, \gamma_{l})
\end{eqnarray*}

For any one of \[\{\beta\cap \beta_{1,e}, \theta_{1,e}\cap \theta_{1,e,e_{1,1}}, \beta_{1,e,e_{1,1}}\cap \beta_{1,e_{1,1}}, \theta_{1,e_{1,1}}\cap \theta_{1,e_{1,2}}, \gamma_{l,e_{1,2}}\cap 
\gamma_{l,e_{1,2}, e_{l}}, \theta_{1,e_{1,2}, e_{l}}\cap \theta_{1,e_{l}}, \gamma_{l,e_{l}}\cap \gamma_{l}\},\] it has at most $2g(S)$ points. It means that each one of \[\{b\cap \beta_{1,e}, \theta_{1,e}\cap \theta_{1,e,e_{1,1}}, \beta_{1,e,e_{1,1}}\cap \beta_{1,e_{1,1}}, \theta_{1,e_{1,1}}\cap \theta_{1,e_{1,2}}, \gamma_{l,e_{1,2}}\cap 
\gamma_{l,e_{1,2}, e_{l}}, \theta_{1,e_{1,2}, e_{l}}\cap \theta_{1,e_{l}}, \gamma_{l,e_{l}}\cap \gamma_{l}\},\] has at most $2g(S)$ points.
Then  by Lemma \ref{lemma2.0}, 
\begin{eqnarray*}
  d_{\mathcal {C}(F)}(b, \beta_{1,e})\leq 2\log_{2}2g(S)+1;\\
  d_{\mathcal{C}(F)}(\theta_{1,e}, \theta_{1,e,e_{1,1}})\leq 2\log_{2}2g(S)+1;\\
  d_{\mathcal {C}(F)}(\beta_{1,e,e_{1,1}}, \beta_{1,e_{1,1}})\leq 2\log2g(S)+1;\\
  d_{\mathcal {C}(F)}(\theta_{1,e_{1,1}}, \theta_{1,e_{1,2}})\leq 2\log2g(S)+1;\\
  d_{\mathcal {C}(F)}(\gamma_{l,e_{1,2}}, \gamma_{l,e_{1,2},e_{l}})\leq 2\log2g(S)+1;\\
  d_{\mathcal {C} (F)}(\theta_{1,e_{1,2}, e_{l}}, \theta_{1,e_{l}})\leq 2\log2g(S)+1;\\
  d_{\mathcal {C}(F)}(\gamma_{l,e_{l}}, \gamma_{l})\leq 2\log2g(S)+1.
\end{eqnarray*}
So, 
\begin{eqnarray*}
d_{\mathcal {C}(F)}(b, \gamma_{l}) &\leq& d_{\mathcal {C}(F)}(\beta_{1,e}, \theta_{1,e})+d_{\mathcal {C} (F)}(\theta_{1,e, e_{1,1}}, \beta_{1,e,e_{1,1}})+d_{\mathcal {C}(F)}(\beta_{1,e_{1,1}}, \theta_{1,e_{1,1}})\\
&+& d_{\mathcal {C}(F)}(\theta_{1,e_{1,2}}, \gamma_{l, e_{1,2}})+  d_{\mathcal{C}(F)}(\gamma_{l,e_{1,2},e_{l}}, \theta_{1,e_{1,2},e_{l}})+ d_{\mathcal {C}(F)}(\theta_{1,e_{l}}, \gamma_{l, e_{l}})\\&+&14\log2g(S)+7.
\end{eqnarray*}
 For any one of $\{S_{F, e}, S_{F, e_{1,1}}, S_{F, e_{1,2}}, S_{F, e_{l}}\}$, it has at most $n-1$ boundary curves; for any one of $\{S_{F, e, e_{1,1}}, S_{F, e_{1,2}, e_{l}}\}$,  it has at most $n-2$ boundary curves.   Thus to get an upper bound,  it is enough to consider the extreme case. Then there is a formula introduced.
\begin{eqnarray*}
 f(n)&=&4f(n-1)+2f(n-2)+14\log2g(S)+7, n\geq 3;\\
 f(2)&=&52, f(1)=12,\\
\end{eqnarray*}
where $\{f(n), n\subset N^{+}\}$ is a  Fibonacci series.  So there is a transformation of it as follows.
\begin{eqnarray*}
 f(n)+rf(n-1)+t&=&s(f(n-1)+rf(n-2)+t), n\geq 3;\\
 f(2)&=&52, f(1)=12.\\
\end{eqnarray*}

So 
\begin{eqnarray*}
s-r&=&4;\\
rs&=&2;\\
st-t&=&14\log2g(S)+7.
\end{eqnarray*}
Then there are two solutions,  which are 
\begin{eqnarray*}
s_{1}=&\sqrt []{6}+2, ~r_{1}=&\sqrt []{6}-2; \\
s_{2}=&2-\sqrt []{6},  ~r_{2}=&-2-\sqrt []{6}.
\end{eqnarray*}
Therfore
\begin{eqnarray*} 
&&(r_{1}-r_{2})f(n)+r_{1}t_{2}-r_{2}t_{1}=r_{1}s_{2}^{n-2}[f(2)+r_{2}f(1)+t_{2}]-r_{2}s_{1}^{n-2}[f(2)+r_{1}f(1)+t_{1}];\\
&&f(n)={\frac{1}{2 \sqrt[]{6}}}[(\sqrt[]{6}-2)\frac{14\log2g(S)+7+(2-\sqrt[]{6})^{n-2}(50 \sqrt[]{6}-14\log{2g(S)}-117)}{\sqrt[]{6}-1}+\\
  &&   +  (\sqrt[]{6}+2)\frac{-14\log2g(S)-7+(2+\sqrt[]{6})^{n-2}(50 \sqrt[]{6}+14\log{2g(S)}+117)}{\sqrt[]{6}+1}], n\geq 3;\\
   &&  f(2)=52, f(1)=12.
\end{eqnarray*}

Since $S_{F}$ has at most $2[g(S)-g(F)]$ boundary components,  $d_{\mathcal {C}(F)}(b, \gamma_{l})\leq f[2g(S)-2g(F)]$. Therefore 
\begin{eqnarray*}
&&d_{\mathcal {C}(F)}(b, \gamma_{l})\leq\\
 &&{\frac{1}{2 \sqrt[]{6}}}[(\sqrt[]{6}-2)\frac{14\log2g(S)+7+(2-\sqrt[]{6})^{2g(S)-2g(F)-2}(50 \sqrt[]{6}-14\log{2g(S)}-117)}{\sqrt[]{6}-1}+\\
   &&  +  (\sqrt[]{6}+2)\frac{-14\log2g(S)-7+(2+\sqrt[]{6})^{2g(S)-2g(F)-2}(50 \sqrt[]{6}+14\log{2g(S)}+117)}{\sqrt[]{6}+1}],\\
      &&g(S)-g(F)\geq 2;\\
     &&d_{\mathcal {C}(F)}(b, \gamma_{l})\leq 52, g(S)-g(F)=1.
    \end{eqnarray*}

For a general case,  by the similar argument,   $d_{\mathcal{C}(F)}(b, \gamma_{l})$ is not larger than the upper bound in the extreme case. 

\end{proof}

So 
\begin{eqnarray*}
d_{\mathcal {C}(F)}( r, c)\leq d_{\mathcal {C}(F)}(r, \gamma_{0})+d_{\mathcal {C}(F)}(\gamma_{0}, \gamma_{l})  +     d_{\mathcal {C}(F)}(\gamma_{l}, b)+d_{\mathcal {C}(F)}(b, c);\\
d_{\mathcal {C}(F)}(r, \gamma_{0})\leq1;\\
d_{\mathcal {C}(F)}(\gamma_{0}, \gamma_{l})\leq 2l\log2g(S)+l < 2m\log2g(S)+m;\\
d_{\mathcal {C}(F)}(b, c)\leq 2\log2\mathcal{N}+1;\\
d_{\mathcal {C}(F)}(b, \gamma_{l})\leq\\ {\frac{1}{2 \sqrt[]{6}}}[(\sqrt[]{6}-2)\frac{14\log2g(S)+7+(2-\sqrt[]{6})^{2g(S)-2g(F)-2}(50 \sqrt[]{6}-14\log{2g(S)}-117)}{\sqrt[]{6}-1}+\\
 +  (\sqrt[]{6}+2)\frac{-14\log2g(S)-7+(2+\sqrt[]{6})^{2g(S)-2g(F)-2}(50 \sqrt[]{6}+14\log{2g(S)}+117)}{\sqrt[]{6}+1}],\\g(S)-g(F)\geq 2;\\
 d_{\mathcal {C}(F)}(b, \gamma_{l})\leq 52, g(S)-g(F)=1.
 \end{eqnarray*}    
Then 
\begin{eqnarray*}
d_{\mathcal {C}(F)}( r, c)<2\log2\mathcal{N}+2m\log2g(S)+m+2+\\
+ {\frac{1}{2 \sqrt[]{6}}}[(\sqrt[]{6}-2)\frac{14\log2g(S)+7+(2-\sqrt[]{6})^{2g(S)-2g(F)-2}(50 \sqrt[]{6}-14\log{2g(S)}-117)}{\sqrt[]{6}-1}+\\ 
+  (\sqrt[]{6}+2)\frac{-14\log2g(S)-7+(2+\sqrt[]{6})^{2g(S)-2g(F)-2}(50 \sqrt[]{6}+14\log{2g(S)}+117)}{\sqrt[]{6}+1}],\\g(S)-g(F)\geq 2;\\
d_{\mathcal {C}(F)}( r, c)<2\log2\mathcal{N}+2m\log2g(S)+m+54, g(S)-g(F)=1.
\end{eqnarray*}

Let 

\begin{eqnarray*}
\mathcal {R}=\max\{ 2\log2\mathcal{N}+2m\log2g(S)+m+2+\\
+ {\frac{1}{2 \sqrt[]{6}}}[(\sqrt[]{6}-2)\frac{14\log2g(S)+7+(2-\sqrt[]{6})^{2g(S)-2g(F)-2}(50 \sqrt[]{6}-14\log{2g(S)}-117)}{\sqrt[]{6}-1}+\\ 
+  (\sqrt[]{6}+2)\frac{-14\log2g(S)-7+(2+\sqrt[]{6})^{2g(S)-2g(F)-2}(50 \sqrt[]{6}+14\log{2g(S)}+117)}{\sqrt[]{6}+1}], \\2\log2\mathcal{N}+2m\log2g(S)+m+54\}.
\end{eqnarray*}
\ Then $d_{\mathcal {C}(F)}( r, c)<\mathcal {R}$. Hence the proof of Theorem \ref{theorem1.1} ends.

 \subsection{The proof of Corollary 1.2}
By Scharlemann and Tomova' result \cite{ST},  every Heegaard splitting of $M$ has distance at most $\max\{d(V,W),2g(S)\}$. It is a result of Kobayashi and Rieck \cite{KR} (an extended  result of Schleimer \cite{schleimer1}  for compact 3-manfiolds) that  if $t$ is the number of tetrahedra and truncated tetrahedra in $M$,  then every genus  at least  $ 76t+26 $ Heegaard splitting  has distance at most 2.  Therefore for any  distance at least 3 Heegaard splitting of $M$, it has genus at most $76t+25$ and distance at most $\max\{d(V,W), 2g(S)\}$.  

By the generalized Waldhausen conjecture proved by Li \cite{li1,li2}, there are finitely many same genus but non isotopic Heegaard splittings for $M$.  So there are finitely many non isotopic distance at least 3 Heegaard splittings of $M$. Therefore there are a maximum $\mathcal {N}$ for all of these distance at least 3 Heegaard splittings and finitely many choices of $c$.  So there are a curve $c^{*}$ and a universal bound $\mathcal {R}^{*}$ in $\mathcal {C}(F)$  so that for any distance degenerate curve $r$ among all its distance at least 3 Heegaard splittings,
$d_{\mathcal {C}(F)}(c^{*}, r)<\mathcal {R}^{*}$.

In particular, if $M=E(K)$ for some knot $K\subset S^{3}$, then the meridian is a distance degenerate slope for any distance at least 3 Heegaard splitting.  So we write Corollary \ref{theorem1.2} as follows.
\begin{cor}
For any high distance knot $K\subset S^{3}$,  there is a $R_{K}$-ball  of the meridian in $\mathcal {C}[\partial E(K)]$   so that it contains all degenerate slopes of its all distance at least 3 Heegaard splittings.
\end{cor}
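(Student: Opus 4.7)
The plan is to reduce the corollary to Theorem~\ref{theorem1.1} applied splitting-by-splitting, then harvest a uniform bound from a finiteness result for the set of distance at least $3$ Heegaard splittings of $E(K)$. For each such Heegaard splitting $V_i\cup_{S_i}W_i$ of $E(K)$, Theorem~\ref{theorem1.1} produces a curve $c_i\subset\partial E(K)$ and a radius $\mathcal R_i$ so that every distance degenerate slope of that splitting lies in the $\mathcal R_i$-ball about $c_i$ in $\mathcal C(\partial E(K))$.

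To replace the moving center $c_i$ by the fixed meridian $\mu$, I would use the observation already recorded in the introduction: $E(K)(\mu)=S^{3}$, so by Waldhausen's theorem every Heegaard splitting of $E(K)(\mu)$ of genus at least $2$ is stabilized and therefore has distance $0$. Consequently $\mu$ is itself a distance degenerate slope for every distance at least $3$ Heegaard splitting of $E(K)$, which puts $\mu$ inside each $\mathcal R_i$-ball, i.e.\ $d_{\mathcal C(\partial E(K))}(c_i,\mu)<\mathcal R_i$. The triangle inequality then shows that every distance degenerate slope of the $i$-th splitting lies within $2\mathcal R_i$ of $\mu$, so it suffices to bound the $\mathcal R_i$ uniformly in $i$.

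The main obstacle is therefore the finiteness step. The radius $\mathcal R$ supplied by Theorem~\ref{theorem1.1} depends on $g(S)$, on the Heegaard distance $m=d(V,W)$, and on the quantity $\mathcal N=\min\{|B\cap E|\}$, each of which must be bounded uniformly across all distance at least $3$ Heegaard splittings of $E(K)$. For this I would invoke the Kobayashi--Rieck bound: any Heegaard splitting of $E(K)$ of sufficiently large genus (depending only on the triangulation of $E(K)$) has distance at most $2$, which uniformly caps $g(S)$ for distance at least $3$ splittings. Scharlemann--Tomova's theorem then caps $m$ in terms of $g(S)$, and once $g(S)$ and $m$ are controlled one gets an intrinsic topological bound on $\mathcal N$ from the complexity of the disk-arc systems involved. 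Finally, Li's generalized Waldhausen conjecture provides only finitely many isotopy classes of Heegaard splittings of $E(K)$ of each fixed genus, so there are only finitely many relevant splittings up to isotopy.

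Putting these bounds together, the set $\{2\mathcal R_i\}$ over the (finitely many) distance at least $3$ Heegaard splittings of $E(K)$ is a finite set of positive reals, so setting $R_K=\max_i 2\mathcal R_i$ yields a single $R_K$-ball about $\mu$ in $\mathcal C(\partial E(K))$ that contains every distance degenerate slope for every distance at least $3$ Heegaard splitting of $E(K)$, as required.
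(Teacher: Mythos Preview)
Your proposal is correct and follows essentially the same route as the paper: apply Theorem~\ref{theorem1.1} to each splitting, use that the meridian is distance degenerate (via Waldhausen) to recenter at $\mu$, bound the genus via Kobayashi--Rieck, bound the distance via Scharlemann--Tomova, and invoke Li's solution of the generalized Waldhausen conjecture to get only finitely many relevant splittings, hence a finite maximum.

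One small point: your sentence ``once $g(S)$ and $m$ are controlled one gets an intrinsic topological bound on $\mathcal N$ from the complexity of the disk-arc systems involved'' is unjustified as stated and in fact unnecessary. There is no direct bound on $\mathcal N$ in terms of $g(S)$ and $m$ alone proved anywhere; the paper simply observes that each individual splitting carries its own finite $\mathcal N$, and since Li's theorem leaves only finitely many splittings, the maximum of the resulting $\mathcal R_i$ is finite. Your final paragraph already makes exactly this argument, so you should drop the earlier claim about $\mathcal N$ and let the finiteness from Li do all the work.
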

\begin{proof}
It is a direct result of the above argument.
\end{proof}

\vskip 3mm

{\it Yanqing Zou\\ {\tiny Department of Mathematics,  Dalian Minzu University \\ yanqing@dlnu.edu.cn; yanqing\_dut@163.com}}

\end{document}